\numberwithin{equation}{section}
\newtheorem{theorem}{Theorem}[section]
\newtheorem{lemma}[theorem]{Lemma}
\newtheorem{corollary}[theorem]{Corollary}
\theoremstyle{definition}
\newtheorem{example}[theorem]{Example}
\theoremstyle{remark}
\newcommand{\Z}{\mathbb{Z}}
\newcommand{\SO}{\mathrm{SO}}
\newcommand{\SU}{\mathrm{SU}}
\newcommand{\Sp}{\mathrm{Sp}}
\title{Samelson products in $p$-regular $\SO(2n)$ and its homotopy normality}
\author{Daisuke Kishimoto}
\address{Department of Mathematics, Kyoto University, Kyoto, 606-8502, Japan}
\email{kishi@math.kyoto-u.ac.jp}
\author{Mitsunobu Tsutaya}
\address{Department of Mathematics, Kyoto University, Kyoto, 606-8502, Japan}
\email{tsutaya@math.kyoto-u.ac.jp}
\thanks{D.K. is supported by JSPS KAKENHI (No. 25400087)}
\subjclass[2010]{55Q15}
\keywords{Samelson product, $p$-regular Lie group, special orthogonal group}
\begin{document}

\maketitle

% baseline
\baselineskip 16pt

\begin{abstract}
A Lie group is called $p$-regular if it has the $p$-local homotopy type of a product of spheres. (Non)triviality of the Samelson products of the inclusions of the factor spheres into $p$-regular $\SO(2n)_{(p)}$ is determined, which completes the list of (non)triviality of such Samelson products in $p$-regular simple Lie groups. As an application, we determine the homotopy normality of the inclusion $\SO(2n-1)\to\SO(2n)$ in the sense of James at any prime $p$.
\end{abstract}

%%%%%%%%%%%%% Section 1 %%%%%%%%%%%%%

\section{Introduction and statement of the results}
\label{section_intro}
Let $G$ be a compact connected Lie group. By the classical result of Hopf, it is well known that there is a rational homotopy equivalence
$$G\simeq_{(0)}S^{2n_1-1}\times\cdots\times S^{2n_\ell-1}$$
where $n_1\le\cdots\le n_\ell$. The sequence $n_1\le\cdots\le n_\ell$ is called the type of $G$. Here is the list of the types of simple Lie groups.

\renewcommand{\arraystretch}{1.2}
\begin{table}[htbp]
\centering
\begin{tabular}{l|l||l|l}
\hline
$\mathrm{SU}(n)$&$2,3,\ldots,n$&$\mathrm{G}_2$&$2,6$\\
$\SO(2n+1)$&$2,4,\ldots,2n$&$\mathrm{F}_4$&$2,6,8,12$\\
$\mathrm{Sp}(n)$&$2,4,\ldots,2n$&$\mathrm{E}_6$&$2,5,6,8,9,12$\\
$\SO(2n)$&$2,4,\ldots,2n-2,n$&$\mathrm{E}_7$&$2,6,8,10,12,14,18$\\
&&$\mathrm{E}_8$&$2,8,12,14,18,20,24,30$\\\hline
\end{tabular}
\end{table}

\noindent Serre generalizes the above rational homotopy equivalence to a $p$-local homotopy equivalence such that when $G$ is semisimple and $G_{(p)}$ is simply connected, there is a $p$-local homotopy equivalence 
\begin{equation}
\label{p-regular}
G\simeq_{(p)}S^{2n_1-1}\times\cdots\times S^{2n_\ell-1}
\end{equation}
if and only if $p\ge n_\ell$, in which case $G$ is called $p$-regular. In this paper we are interested in the standard multiplicative structure of the $p$-localization $G_{(p)}$ when $G$ is $p$-regular, and then we assume that $G$ is a simple Lie group in the above table and is $p$-regular throughout this section. Recall that for a homotopy associative H-space $X$ with inverse and maps $\alpha\colon A\to X,\beta\colon B\to X$, the correspondence
$$A\wedge B\to X,\quad(x,y)\mapsto\alpha(x)\beta(y)\alpha(x)^{-1}\beta(y)^{-1}$$
is called the Samelson product of $\alpha,\beta$ in $X$ and is denoted by $\langle\alpha,\beta\rangle$. One easily sees that in investigating the multiplicative structure of $G_{(p)}$, the Samelson products $\langle\epsilon_i,\epsilon_j\rangle$ play the fundamental role as in \cite{KK}, where $\epsilon_i$ is the inclusion $S^{2n_i-1}\to S^{2n_1-1}_{(p)}\times\cdots\times S^{2n_\ell-1}_{(p)}\simeq G_{(p)}$ into the $i$-th factor. So it is our task to determine (non)triviality of these Samelson products.
In this direction, Bott \cite{B} studied the order of a certain class of Samelson products in $\SU(n)$ and $\Sp(n)$, for example.

We here make a remark on the choice of $\epsilon_i$ which depends on the $p$-local homotopy equivalence \eqref{p-regular}. Recall from \cite[Theorem 13.4]{T} that 
\begin{equation}
\label{pi(S)}
\pi_*(S^{2m-1}_{(p)})=0\quad\text{for}\quad 2m-1<*<2m+2p-4.
\end{equation}
Then we see that $\pi_{2n_i-1}(G_{(p)})$ is a free $\Z_{(p)}$-module for all $i$, and so $\pi_{2n_i-1}(G_{(p)})\cong\Z_{(p)}$ for all $i$ and $G\ne\SO(2n)$ since the entries of the type are distinct for $G\ne\SO(2n)$ as in the above table. Hence for $G\ne\SO(2n)$ we may choose any generator of $\pi_{2n_i-1}(G_{(p)})\cong\Z_{(p)}$ as $\epsilon_i$. For $G=\SO(2n)$ we will make an explicit choice of $\epsilon_i$ below.

We first consider the Samelson products $\langle\epsilon_i,\epsilon_j\rangle$ in $G_{(p)}$ when $G$ is the classical group execpt for $\SO(2n)$.

\begin{theorem}
\label{classical}
Let $G$ be the $p$-regular classical group except for $\SO(2n)$, and let $\epsilon_i$ be a generator of $\pi_{2n_i-1}(G_{(p)})\cong\Z_{(p)}$ for the type $\{n_1,\ldots,n_\ell\}$ of $G$. Then 
$$\langle\epsilon_i,\epsilon_j\rangle\ne 0\quad\text{if and only if}\quad n_i+n_j>p.$$
\end{theorem}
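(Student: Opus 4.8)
The plan is to study these Samelson products by passing to mod $p$ cohomology and using the fact that for $p$-regular classical groups the cohomology is an exterior algebra on generators $x_{2n_i-1}$ sitting in the image of the primitive generators of the factor spheres. The strategy is to detect nontriviality of $\langle\epsilon_i,\epsilon_j\rangle\colon S^{2n_i-1}\wedge S^{2n_j-1}\to G_{(p)}$, which lives in $\pi_{2n_i+2n_j-2}(G_{(p)})$, by analyzing its Hurewicz image or, more robustly, by comparing with the universal example coming from the loop space structure. Because of the vanishing range \eqref{pi(S)}, the homotopy group $\pi_{2n_i+2n_j-2}(G_{(p)})$ is controlled by a single sphere factor $S^{2m-1}$ with $n_i+n_j-1 \le m$, so the Samelson product can only be nonzero when its target dimension $2n_i+2n_j-2$ reaches the bottom cell of some factor, i.e. when $2n_i+2n_j-2\ge 2(n_i+n_j)-1$ fails in the stable range; this is precisely where the numerical condition $n_i+n_j>p$ enters.

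First I would set up the homotopy-theoretic detection. For classical groups the relevant Samelson products are closely tied to the attaching maps in the unstable complexes $\SU(n)$, $\Sp(n)$, $\SO(2n+1)$, and by the known $p$-local cell structure (the Steenrod operation $\mathcal{P}^1$ linking consecutive generators) one gets a lower bound: $\langle\epsilon_i,\epsilon_j\rangle\ne0$ whenever the dimensions force a nontrivial $\mathcal{P}^1$ or, more generally, a nontrivial $\alpha_1$-family element in $\pi_*(S^{2m-1}_{(p)})$. The first nonzero $p$-torsion class above the bottom of $S^{2m-1}_{(p)}$ occurs in degree $2m-1+2p-3=2m+2p-4$, detected by $\mathcal{P}^1$, and this is exactly the first element $\alpha_1$. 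Hence I would reduce the problem to the assertion that $\langle\epsilon_i,\epsilon_j\rangle$ maps to this $\alpha_1$-class nontrivially precisely when $n_i+n_j>p$.

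Next I would carry out the computation on both sides. For the necessity (vanishing when $n_i+n_j\le p$), the target dimension $2(n_i+n_j)-2$ falls below $2n_\ell-1+2p-4$ by the $p$-regularity hypothesis $p\ge n_\ell$, so by \eqref{pi(S)} the relevant homotopy group of each factor sphere vanishes in the appropriate range and the product is forced to be zero. For the sufficiency (nontriviality when $n_i+n_j>p$), I would exhibit a nonzero $\mathcal{P}^1$-action in $H^*(G;\mathbb{Z}/p)$ connecting $x_{2n_i-1}$ (or the appropriate combination) to a generator in degree $2n_i+2p-3$, and relate this to the Samelson product via the standard formula expressing Samelson products in terms of the coalgebra/Hopf-algebra structure and the action of the Steenrod algebra, as in the approach of \cite{B} and \cite{KK}. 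The main computational input is the explicit structure of the Steenrod operations on the generators of $H^*(G;\mathbb{Z}/p)$, which is classical for $\SU(n)$, $\Sp(n)$, and $\SO(2n+1)$.

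The hard part will be the sufficiency direction: showing that the numerical inequality $n_i+n_j>p$ actually forces $\langle\epsilon_i,\epsilon_j\rangle$ to be nonzero, rather than merely failing to vanish for dimensional reasons. This requires correctly identifying the Hurewicz or cohomological invariant that detects the product and verifying that the relevant $\mathcal{P}^1$-coefficient is a unit in $\mathbb{Z}/p$. I expect the delicate point to be bookkeeping the difference in types among $\SU(n)$ (type $2,3,\dots,n$, spacing $1$) versus $\Sp(n)$ and $\SO(2n+1)$ (spacing $2$), since the parity and the precise degrees of the generators affect which pairs $(i,j)$ satisfy $n_i+n_j>p$ and whether $\mathcal{P}^1$ indeed acts nontrivially; handling these cases uniformly, or at least systematically, is where the argument demands care.
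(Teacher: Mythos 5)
Your ``only if'' direction is essentially the paper's argument and is fine once cleaned up: $\langle\epsilon_i,\epsilon_j\rangle$ lies in $\pi_{2(n_i+n_j)-2}(G_{(p)})\cong\bigoplus_k\pi_{2(n_i+n_j)-2}(S^{2n_k-1}_{(p)})$, and by \eqref{pi(S)} an even-degree homotopy group of $S^{2n_k-1}_{(p)}$ first appears in degree $2n_k+2p-4$, so the whole group vanishes exactly when $n_i+n_j-1<n_k+p-2$ for \emph{every} $k$, i.e.\ when $n_i+n_j\le p$. Note that the binding constraint comes from the \emph{smallest} factor $n_1=2$, not from $n_\ell$ as you wrote; also the inequality ``$n_i+n_j-1\le m$'' in your first paragraph is backwards (the factors that can carry the product are those with $n_m\le n_i+n_j-p+1$), and the sentence ``$2n_i+2n_j-2\ge 2(n_i+n_j)-1$ fails in the stable range'' is vacuous as written.

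The genuine gap is the ``if'' direction, which is the substantive half of the theorem and which you leave entirely as a plan. Your proposed route requires two things you do not supply: (1) the detection lemma itself, namely that a decomposable $\mathcal{P}^1x_k$ containing $cx_ix_j$ with $c\ne0$ forces $\langle\epsilon_i,\epsilon_j\rangle\ne0$ --- this is Lemma \ref{criterion} of the paper and needs the Whitehead-product and projective-plane argument, not merely ``the standard formula expressing Samelson products in terms of the coalgebra structure''; and (2) the computation that for every pair with $n_i+n_j>p$ there is a $k$ with $n_k=n_i+n_j-p+1$ for which the coefficient of $x_ix_j$ in $\mathcal{P}^1x_k$ is a unit mod $p$, carried out for the Chern classes of $\SU(n)$ and the symplectic Pontrjagin classes of $\Sp(n)$. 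You explicitly flag (2) as ``the hard part'' and stop there, so nothing is proved. There is moreover a boundary case where your method cannot conclude at all: when $n_i+n_j-p+1>n_\ell$ (this happens for $i=j=\ell$ in $\SU(p)$ at the prime $p$) there is no candidate $x_k$ in the right degree, the product lands in a stem carried by $\alpha_2$ rather than $\alpha_1$, and no primary $\mathcal{P}^1$ argument on $BG$ can detect it. The paper sidesteps all of this: nontriviality for $\SU(n)$ and $\Sp(n)$ is quoted directly from Bott's explicit computation of the orders of these Samelson products, and $\SO(2n+1)$ is reduced to $\Sp(n)$ via Friedlander's $p$-local loop-space equivalence $\Sp(n)_{(p)}\simeq\SO(2n+1)_{(p)}$ --- a reduction (or a separate Pontrjagin-class computation) that your sketch would also need but does not mention.
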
 

\begin{proof}
If $G=\mathrm{SU}(n),\mathrm{Sp}(n)$, nontriviality of the Samelson products follows from the result of Bott \cite{B} and triviality follows from the fact that $\pi_{2*}(G_{(p)})=0$ for $*<p$ which is deduced from \eqref{pi(S)}. Since there is a homotopy equivalence as loop spaces $\mathrm{Sp}(n)_{(p)}\simeq\SO(2n+1)_{(p)}$ due to Friedlander \cite{F}, the case of $\SO(2n+1)_{(p)}$ is the same as $\mathrm{Sp}(n)_{(p)}$.
\end{proof}

We next consider the Samelson products $\langle\epsilon_i,\epsilon_j\rangle$ in $G_{(p)}$ when $G$ is the exceptional Lie group. Some of these Samelson products are calculated in \cite{HK2,KK}, and (non)triviality of all these Samelson products is determined in \cite{HKO} as follows. 

\begin{theorem}
[\cite{HKO}]
\label{HKO}
Let $G$ be a $p$-regular compact connected exceptional simple Lie group, and let $\epsilon_i$ be a generator of $\pi_{2n_i-1}(G_{(p)})\cong\Z_{(p)}$ for the type $\{n_1,\ldots,n_\ell\}$ of $G$. Then
$$\langle\epsilon_i,\epsilon_j\rangle\ne 0\quad\text{if and only if}\quad n_i+n_j=n_k+p-1\text{ for some }k.$$
\end{theorem}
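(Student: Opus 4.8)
The plan is to exploit the $p$-local decomposition $G_{(p)}\simeq S^{2n_1-1}\times\cdots\times S^{2n_\ell-1}$ to reduce the vanishing question to the sphere factors, and then to detect the surviving component by a Steenrod operation. Writing $\rho_k\colon G_{(p)}\to S^{2n_k-1}_{(p)}$ for the projection, I would first observe that $\langle\epsilon_i,\epsilon_j\rangle$ lies in $\pi_{2(n_i+n_j)-2}(G_{(p)})\cong\bigoplus_k\pi_{2(n_i+n_j)-2}(S^{2n_k-1}_{(p)})$, so it is trivial if and only if every $\rho_k\circ\langle\epsilon_i,\epsilon_j\rangle$ is trivial. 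Using \eqref{pi(S)} together with the standard low-dimensional $p$-local homotopy of odd spheres, the group $\pi_{2(n_i+n_j)-2}(S^{2n_k-1}_{(p)})$ vanishes unless the relative degree $2(n_i+n_j-n_k)-1$ equals $2p-3$, that is, unless $n_i+n_j=n_k+p-1$, in which case it is $\Z/p$ generated by $\alpha_1$. Parity forces the bottom class to be missed, and the next candidate $\alpha_2$ (relative degree $4p-5$) is excluded because the entries of the type of an exceptional $G$ are distinct, so $n_i+n_j\le p+(p-1)<2p$ while $\alpha_2$ would require $n_i+n_j=n_k+2p-2\ge 2p$. This already yields the "only if" direction.

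For the converse I would need, for each $k$ with $n_i+n_j=n_k+p-1$, to show that $\rho_k\circ\langle\epsilon_i,\epsilon_j\rangle$ is a nonzero multiple of $\alpha_1$. Since $\alpha_1$ is detected by the reduced power $P^1$ acting nontrivially across the two cells of the mapping cone, I would pass to the classifying space, where $\langle\epsilon_i,\epsilon_j\rangle$ corresponds under the adjunction $\pi_*(G)\cong\pi_{*+1}(BG)$ to the Whitehead product of the adjoints $\tilde\epsilon_i\colon S^{2n_i}\to BG$. In $H^*(BG;\Z/p)\cong\Z/p[y_{2n_1},\ldots,y_{2n_\ell}]$ one has $\deg P^1y_{2n_k}=2n_k+2(p-1)=2n_i+2n_j=\deg(y_{2n_i}y_{2n_j})$, and the expected detection statement is that $\rho_k\circ\langle\epsilon_i,\epsilon_j\rangle$ is a nonzero multiple of $\alpha_1$ if and only if the coefficient of $y_{2n_i}y_{2n_j}$ in $P^1y_{2n_k}$ is a unit modulo $p$. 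Establishing this equivalence rigorously — matching the unstable Samelson product with the stable $P^1$-action through the adjunction, and checking that the higher-length decomposables and the $p$-th power terms occurring in $P^1y_{2n_k}$ do not interfere — is the first technical hurdle.

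With such a detection principle in hand, the theorem reduces to a finite computation: for each of $\mathrm{G}_2,\mathrm{F}_4,\mathrm{E}_6,\mathrm{E}_7,\mathrm{E}_8$, each prime $p\ge n_\ell$, and each admissible triple $(i,j,k)$ with $n_i+n_j=n_k+p-1$, one must verify that the $y_{2n_i}y_{2n_j}$-coefficient of $P^1y_{2n_k}$ is nonzero. I would feed in the known descriptions of $H^*(BG;\Z/p)$ as an algebra over the Steenrod algebra for the exceptional groups and compute $P^1$ on each generator via the Cartan formula and the unstable conditions. I expect the real difficulty to lie exactly here: the assertion is that the degree condition \emph{alone} governs (non)triviality, so one must prove that these coefficients never vanish accidentally, uniformly across all exceptional types and all $p$-regular primes. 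Thus the main obstacle is twofold: proving the general detection principle relating the Samelson product to an explicit $P^1$-coefficient, and then confirming the nonvanishing of that coefficient whenever the degree condition holds, which is the precise point at which the special arithmetic of the exceptional root data must be invoked.
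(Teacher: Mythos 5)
This theorem is quoted from \cite{HKO} rather than proved in the present paper, and your outline follows essentially the same route the paper attributes to that reference: the ``only if'' direction is the degree/vanishing argument on the sphere factors, and the detection principle you ask for is exactly Lemma~\ref{criterion} (whose converse, Lemma~\ref{criterion2} and Theorem~\ref{P}, is what this paper adds), with the remaining case-by-case verification of the $\mathcal{P}^1$-coefficients in $H^*(BG_{(p)};\Z/p)$ being the computational content of \cite{HKO}. The two ``hurdles'' you identify are precisely these two published ingredients, so the proposal is correct in approach and matches the intended proof.
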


Thus the only remaining case is $\SO(2n)$.
The purpose of this paper is to show that a sufficient condition for nontriviality of the Samelson products $\langle\epsilon_i,\epsilon_j\rangle$ in $G_{(p)}$ (Lemma \ref{criterion}) used in \cite{KO,HK1,HK2,HKO} is actually a necessary and sufficient condition, and is to apply it to determination of (non)triviality of all the Samelson products $\langle\epsilon_i,\epsilon_j\rangle$ in $\SO(2n)_{(p)}$.
The difficulty of this case is caused by the middle dimensional sphere $S^{2n-1}_{(p)}$ in $\SO(2n)_{(p)}$ which vanishes by the inclusion $\SO(2n)\to\SO(2n+1)$.
we choose the maps $\epsilon_i$. Let $\epsilon_i$ be the composite 
$$S^{4i-1}\to\SO(2n-1)_{(p)}\xrightarrow{\rm incl}\SO(2n)_{(p)}$$ 
for $i=1,\ldots,n-1$, where the first arrow is a generator of $\pi_{4i-1}(\SO(2n-1)_{(p)})\cong\Z_{(p)}$.
Let $\theta\colon S^{2n-1}\to\SO(2n)_{(p)}$ be the map corresponding to the adjoint of the fiber inclusion of the canonical homotopy fiber sequence
$$S^{2n}\to B\SO(2n)\to B\SO(2n+1).$$
There are only two results on Samelson products in $\SO(2n)$ involving $\theta$: Mahowald \cite{Ma} showed that the Samelson product $\langle\theta,\theta\rangle\in\pi_{4n-2}(\SO(2n))$ has order $(2n-1)!/8$ or $(2n-1)!/4$ according as $n$ is even or odd.
Hamanaka and Kono \cite{HK1} showed that the Samelson product $\langle\epsilon_{\tfrac{p-1}{2}},\theta\rangle\in\pi_{4n-2}(\SO(2n)_{(p)})$ is non-trivial when $p\le 2n-1$.
Our main result determines (non)triviality of all Samelson products of $\epsilon_i$ and $\theta$ in $p$-regular $\SO(2n)$.

\begin{theorem}
\label{main}
Let $\epsilon_i,\theta$ be the above maps into $\SO(2n)_{(p)}$ for $p$-regular $\SO(2n)$. All nontrivial Samelson products of $\epsilon_i,\theta$ in $\SO(2n)_{(p)}$ are
$$\langle\epsilon_i,\epsilon_j\rangle\quad\text{for}\quad 2i+2j>p\quad\text{and}\quad\langle\epsilon_{n-1},\theta\rangle=\langle\theta,\epsilon_{n-1}\rangle,\;\langle\theta,\theta\rangle\quad\text{for}\quad p=2n-1.$$
\end{theorem}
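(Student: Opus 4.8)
The plan is to treat the three families of Samelson products separately, in each case reducing (non)triviality to the (non)vanishing of a single $P^1$-coefficient in $H^*(B\SO(2n);\Z/p)=\Z/p[p_1,\dots,p_{n-1},e]$, where $p_k$ is the $k$-th Pontryagin class and $e$ the Euler class. Under the $p$-local decomposition the factor $S^{4k-1}_{(p)}$ is detected by $p_k$ (so $n_k=2k$) and $S^{2n-1}_{(p)}$ by $e$ (so $n_\theta=n$). The guiding principle, which is exactly the content of the sharpened criterion (Lemma \ref{criterion} upgraded from a sufficient condition to an equivalence), is that for $p$-regular $\SO(2n)$ a Samelson product of factor-sphere inclusions is nontrivial \emph{if and only if} its projection to some factor $S^{2m-1}_{(p)}$ is a nonzero multiple of $\alpha_1\in\pi_{2m+2p-4}(S^{2m-1}_{(p)})\cong\Z/p$, and this happens precisely when the degrees satisfy the first-torsion relation $n_i+n_j=m+p-1$ and the corresponding coefficient in $P^1$ is nonzero mod $p$.

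First I would dispose of $\langle\epsilon_i,\epsilon_j\rangle$. These maps factor through the fiber inclusion $\SO(2n-1)_{(p)}\to\SO(2n)_{(p)}$, and the product decomposition exhibits this inclusion as the split monomorphism $\prod_{k=1}^{n-1}S^{4k-1}_{(p)}\hookrightarrow\prod_{k=1}^{n-1}S^{4k-1}_{(p)}\times S^{2n-1}_{(p)}$, hence it is injective on homotopy groups. By naturality of Samelson products, $\langle\epsilon_i,\epsilon_j\rangle$ in $\SO(2n)_{(p)}$ is the image of the corresponding product in $\SO(2n-1)_{(p)}$, so it is nontrivial precisely when the latter is. Since $\SO(2n-1)$ is a $p$-regular odd orthogonal group covered by Theorem \ref{classical}, with $n_i=2i$, this gives $\langle\epsilon_i,\epsilon_j\rangle\neq0$ iff $2i+2j>p$.

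Next I would handle the products involving $\theta$, exploiting naturality of the whole picture with respect to $\SO(2n)\to\SO(2n+1)$, under which $\theta$ becomes null and the cohomology map sends $e\mapsto 0$, $p_k\mapsto p_k$. The crucial consequence is that $P^1$ of any Pontryagin class contains \emph{no} Euler-class term. Therefore the detecting monomial for $\langle\epsilon_i,\theta\rangle$, namely $p_ie$, can occur only in $P^1e$, forcing the receiving factor to be $e$ and hence $2i+n=n+p-1$, i.e. $i=\tfrac{p-1}{2}$. Combined with $1\le i\le n-1$ and $p$-regularity $p\ge 2n-1$, this leaves only $p=2n-1$ and $i=n-1$; nontriviality in that case is exactly the theorem of Hamanaka--Kono \cite{HK1}, while $\langle\theta,\epsilon_{n-1}\rangle$ is settled simultaneously by the (anti)symmetry of the Samelson product, which $p$-locally is a unit multiple. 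For $\langle\theta,\theta\rangle$ the detecting monomial is $e^2=p_n$, appearing in $P^1p_1$ only when $2n=2+p-1$, i.e. $p=2n-1$, detected on the $S^3_{(p)}$ factor. Here I would invoke Mahowald's computation \cite{Ma} that the order of $\langle\theta,\theta\rangle$ is $(2n-1)!/8$ or $(2n-1)!/4$: its $p$-part equals $p$ when $p\le 2n-1$ (as $p$ divides $(2n-1)!$ and is prime to $4,8$) and equals $1$ otherwise, so, among $p$-regular primes, $\langle\theta,\theta\rangle\neq0$ iff $p=2n-1$.

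The main obstacle is establishing the \emph{necessity} half of the criterion, that is, that no such Samelson product can be nontrivial \emph{except} through the first-torsion class $\alpha_1$ detected by $P^1$ above; a priori the product could be carried by a higher element of the $\alpha$-family in a small factor, or by the integral class in $\pi_{4n-1}(S^{2n}_{(p)})$ that appears through the repeated type-entry $n$ when $n$ is even. To rule this out I would combine the vanishing range \eqref{pi(S)} with the fibration $\SO(2n)\to\SO(2n+1)\to S^{2n}$: since every $\theta$-product dies in $\SO(2n+1)_{(p)}$, it lies in the image of the connecting map $\partial\colon\pi_{*+1}(S^{2n}_{(p)})\to\pi_*(\SO(2n)_{(p)})$, and a degree count shows that the only class in that image meeting a first-torsion target is the one already detected by $P^1$. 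Carrying out this bookkeeping carefully, in particular disentangling the two $(2n-1)$-dimensional factors present when $n$ is even, is the technical heart of the argument.
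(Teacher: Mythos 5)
Your computational skeleton matches the paper's: you reduce $\langle\epsilon_i,\epsilon_j\rangle$ to $\SO(2n-1)_{(p)}$ via the split inclusion and Theorem \ref{classical} exactly as the paper does, and you correctly identify the two cohomological facts that drive the $\theta$-products, namely that $\mathcal{P}^1p_i$ contains no monomial $e_np_j$ (because $p_i$ comes from $B\SO(2n+1)$, so only even powers of $e_n$ can appear --- the paper's Lemma \ref{computation_Pp}(1)) and that the candidate detectors $e_np_{(p-1)/2}$ in $\mathcal{P}^1e_n$ and $e_n^2$ in $\mathcal{P}^1p_1$ occur only when $p=2n-1$. Outsourcing the nontriviality to Hamanaka--Kono \cite{HK1} for $\langle\epsilon_{n-1},\theta\rangle$ and to Mahowald \cite{Ma} for $\langle\theta,\theta\rangle$ is legitimate (the paper instead computes the coefficients in Lemmas \ref{computation_Pp} and \ref{computation_Pe}); indeed the $p$-part of Mahowald's order $(2n-1)!/8$ or $(2n-1)!/4$ settles $\langle\theta,\theta\rangle$ in both directions without any criterion.

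The genuine gap is the one you yourself flag: the ``only if'' half of the detection criterion, i.e.\ that a Samelson product of factor spheres can be nontrivial \emph{only} through the $\alpha_1$-component prescribed by $\mathcal{P}^1$. This is not a formality: for $n$ even and $p=2n-1$ the group $\pi_{4n-2}(\SO(2n)_{(p)})\cong\Z/p$ (carried by the $S^3_{(p)}$ factor, since $4n-2=3+(2p-3)$), and the degree relation $2\cdot\tfrac{n}{2}+n=2+p-1$ is satisfied, so $\langle\epsilon_{n/2},\theta\rangle$ lives in a nonzero group and its vanishing genuinely requires knowing that it could only be detected by the (absent) term $p_{n/2}e_n$ in $\mathcal{P}^1p_1$. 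Your proposed substitute --- that every $\theta$-product lies in the image of $\partial\colon\pi_{*+1}(S^{2n}_{(p)})\to\pi_*(\SO(2n)_{(p)})$ and then ``a degree count'' finishes --- cannot close exactly this case: here the source is $\pi_{4n-1}(S^{2n}_{(p)})\cong\Z_{(p)}$ and the target is $\Z/p$, both nonzero, so no counting argument decides whether the product is the zero multiple of $\partial$ of the generator. What is needed is the paper's Theorem \ref{P} (an if-and-only-if version of Lemma \ref{criterion}), whose proof goes through Lemma \ref{criterion2}: one compares the cofiber of $\Sigma\langle\epsilon_i,\epsilon_j\rangle-[\Sigma\epsilon_i,\Sigma\epsilon_j]$ with the projective plane $P^2G_{(p)}$ via Morisugi's Hopf-construction factorization \cite{Mo}, so that the coefficient of $x_ix_j$ in $\mathcal{P}^1x_k$ literally equals the $\alpha_1$-coefficient of $\pi_k\circ\langle\epsilon_i,\epsilon_j\rangle$, combined with the result of \cite{KK} that a nontrivial $\langle\epsilon_i,\epsilon_j\rangle$ in a $p$-regular loop space must have a nontrivial projection to some factor in the degree $n_i+n_j=n_k+p-1$. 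Without this (or an equivalent), the triviality assertions of the theorem for $\langle\epsilon_i,\theta\rangle$ with $i\ne\tfrac{p-1}{2}$ are not established.
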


Recall that an H-map $f\colon X\to Y$ between homotopy associative H-spaces with inverse $X,Y$ is homotopy normal in the sense of James \cite{J} if the Samelson product $\langle f,1_Y\rangle$ compressed to $X$ through $f$ up to homotopy. This is a generalization of the inclusion of a normal subgroup.
James proved that $\operatorname{O}(n)$ is not homotopy normal in $\operatorname{O}(n+1)$ when $n\ge 2$ using the mod $2$ cohomology.
His proof implies that the $2$-localization $\SO(n)_{(2)}$ is not homotopy normal in $\SO(n+1)_{(2)}$ when $n\ge 2$.
As an application of Theorem \ref{main} we will prove:

\begin{theorem}
\label{normal}
The inclusion $\iota_{(p)}\colon\SO(2n-1)_{(p)}\to\SO(2n)_{(p)}$ is homotopy normal if and only if $p>2n-1$.
\end{theorem}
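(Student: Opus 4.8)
The plan is to reformulate homotopy normality in terms of the fibre bundle projection $\pi'\colon\SO(2n)\to S^{2n-1}$ of $\SO(2n-1)\to\SO(2n)\xrightarrow{\pi'}S^{2n-1}$ together with the Samelson products of Theorem \ref{main}. Two elementary facts drive everything: $\pi'\circ\iota\simeq *$ since $\iota$ is the fibre inclusion, and $\pi'\circ\theta$ has degree equal to the Euler number $\chi(S^{2n})=2$, which is a $p$-local unit because $p$ is odd. Hence, when $\SO(2n)$ is $p$-regular, the map $\Phi(x,y)=\iota(x)\cdot\theta(y)$ is a homotopy equivalence $\SO(2n-1)_{(p)}\times S^{2n-1}_{(p)}\xrightarrow{\simeq}\SO(2n)_{(p)}$ identifying $\iota_{(p)}$ with the inclusion of the first factor. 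Writing $q$ for the projection onto the second factor, a map compresses through $\iota_{(p)}$ exactly when its composite with $q$ is null, so $\iota_{(p)}$ is homotopy normal if and only if $q\circ\langle\iota_{(p)},1\rangle\simeq *$.

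For $p\le 2n-1$ I would prove non-normality without invoking the splitting. If $\iota_{(p)}$ were homotopy normal then $\langle\iota_{(p)},1\rangle$ would factor through $\iota_{(p)}$, and composing with $\pi'$ would give $\pi'\circ\langle\iota_{(p)},1\rangle\simeq *$ because $\pi'\circ\iota\simeq *$. Restricting to the smash summand $S^{2p-3}\wedge S^{2n-1}$, with $S^{2p-3}$ sitting in $\SO(2n-1)_{(p)}$ as $\epsilon_{(p-1)/2}$ and $S^{2n-1}$ the $\theta$-factor, this restriction is $\pi'_*\langle\epsilon_{(p-1)/2},\theta\rangle$, which lies in the first nonvanishing homotopy group of $S^{2n-1}_{(p)}$ above the bottom cell (of order $p$ by \eqref{pi(S)}) and is nonzero by Hamanaka--Kono \cite{HK1}. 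This contradicts $\pi'\circ\langle\iota_{(p)},1\rangle\simeq *$; for $p=2$ one instead invokes James \cite{J}. Thus $\iota_{(p)}$ is not homotopy normal when $p\le 2n-1$.

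For $p>2n-1$, so that $\SO(2n)$ is $p$-regular and $p$ is odd, I would show $q\circ\langle\iota_{(p)},1\rangle\simeq *$. By bi-multiplicativity the product $\langle\iota_{(p)},1\rangle$ splits, up to compression through $\iota$, into two pieces according to the two factors of $1$ under $\Phi$. The piece from the first factor is $\langle\iota_{(p)},\iota\circ r\rangle=\iota\circ\langle 1,r\rangle$, where $r$ is the projection onto $\SO(2n-1)_{(p)}$ and naturality for the H-map $\iota$ is used; it factors through $\iota$ and is killed by $q$. The piece from the second factor is $\langle\iota_{(p)},\theta\rangle\circ(1\wedge q)$, and since $1\wedge q$ admits a section up to a $p$-local unit, homotopy normality reduces to $q\circ\langle\iota_{(p)},\theta\rangle\simeq *$. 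I would establish this by an obstruction-theoretic compression over the skeleta of $\SO(2n-1)_{(p)}\simeq\prod_{i=1}^{n-1}S^{4i-1}$: the obstruction over the cell $S^{4i-1}$ is $\langle\epsilon_i,\theta\rangle$, which vanishes for $p\neq 2n-1$ by Theorem \ref{main}, while the obstructions over the higher cells are iterated Samelson products containing a single factor $\theta$. By graded antisymmetry and the Jacobi identity every such bracket is a combination of right-normed brackets $\langle\epsilon_{i_1},\langle\cdots\langle\epsilon_{i_{r-1}},\theta\rangle\cdots\rangle\rangle$ whose innermost term $\langle\epsilon_{i_{r-1}},\theta\rangle$ vanishes, so all obstructions vanish and the compression exists.

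The main obstacle is this last step. Theorem \ref{main} records only the length-two Samelson products, so the contribution of the length $\ge 3$ brackets involving $\theta$ must be controlled separately; this is exactly where the Jacobi identity, reducing each such bracket to one containing a vanishing $\langle\epsilon_i,\theta\rangle$, and the sparseness \eqref{pi(S)} of $\pi_*(S^{2n-1}_{(p)})$, which forces the primary obstructions to be these brackets with no room for secondary obstructions, both become essential. A smaller technical point is to check that the splitting projection $q$ and the bundle projection $\pi'$ agree up to a $p$-local unit on the bottom nonzero homotopy of $S^{2n-1}_{(p)}$, so that the Hamanaka--Kono computation, phrased via $\pi'$, indeed detects the obstruction measured by $q$.
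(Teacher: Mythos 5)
Your treatment of the non-normality direction is essentially the paper's. For $p=2$ one quotes James, and for $2<p\le 2n-1$ one composes $\langle\iota_{(p)},1\rangle$ with $\epsilon_{\frac{p-1}{2}}\wedge\theta$ and with the projection $\pi\colon\SO(2n)_{(p)}\to S^{2n-1}_{(p)}$, using $\pi\circ\iota_{(p)}\simeq *$ together with the fact that $\pi_*\langle\epsilon_{\frac{p-1}{2}},\theta\rangle$ generates $\pi_{2n+2p-4}(S^{2n-1}_{(p)})\cong\Z/p$; this is exactly Corollary \ref{e_theta}, which sharpens Hamanaka--Kono and holds outside the $p$-regular range, so that part is sound. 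The reduction of normality for $p>2n-1$ to the vanishing of $\langle\iota_{(p)},\theta\rangle$ via the splitting $1_{\SO(2n)_{(p)}}=\iota_{(p)}\cdot\theta$ is also the paper's route.

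The genuine gap is in the step you yourself flag as the main obstacle: deducing the vanishing of $\{\epsilon_1\cdots\epsilon_{n-1},\theta\}$, i.e.\ of $\langle\iota_{(p)},\theta\rangle$, from the vanishing of the pairwise products $\langle\epsilon_i,\theta\rangle$. Your skeleton-by-skeleton compression is not carried out: identifying the higher obstructions with iterated Samelson products requires choosing null-homotopies on lower skeleta and controlling the resulting indeterminacy, and the claim that \eqref{pi(S)} leaves ``no room for secondary obstructions'' is unjustified --- the target is $\SO(2n)_{(p)}$, a product of several spheres whose homotopy is not concentrated where you need it to be once the cells of $\prod_i S^{4i-1}\times S^{2n-1}$ of length $\ge 3$ enter. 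Moreover the whole apparatus is unnecessary. The paper's Lemma \ref{decomp} does the reduction exactly: in the group $[A_1\times A_2\times B,X]$ the identity $[xy,z]=x[y,z]x^{-1}[x,z]$ holds on the nose, so if $\{\varphi_2,\beta\}$ is trivial then $\{\varphi_1\cdot\varphi_2,\beta\}=\{\varphi_1,\beta\}\circ(\text{projection})$ with no correction terms whatsoever. Iterating this over $\iota_{(p)}=\epsilon_1\cdots\epsilon_{n-1}$ kills $\{\iota_{(p)},\theta\}$ in one stroke once each $\langle\epsilon_i,\theta\rangle=0$ (Theorem \ref{main}), and a second application with $1_{\SO(2n)_{(p)}}=\iota_{(p)}\cdot\theta$ finishes the compression. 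Working in the group of homotopy classes of maps out of the product, rather than cell by cell, is precisely what makes all length-$\ge 3$ brackets and secondary obstructions disappear; you should replace your obstruction-theoretic step with this commutator identity.
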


For $p>2n-1$, we can prove the following stronger result.

\begin{theorem}
\label{H-type}
For $p>2n-1$, the map $\iota_{(p)}\cdot\theta\colon\SO(2n-1)_{(p)}\times S^{2n-1}_{(p)}\to\SO(2n)_{(p)}$ is an H-equivalence, where $S^{2n-1}_{(p)}$ is a homotopy associative and homotopy commutative H-space.
\end{theorem}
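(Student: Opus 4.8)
The plan is to verify two things separately: that $\iota_{(p)}\cdot\theta$ is a homotopy equivalence, and that it is an H-map; together these make it an H-equivalence. First I would dispose of the homotopy equivalence by a homology computation. Since $p>2n-1$ forces $p$ odd, both $\SO(2n-1)_{(p)}\times S^{2n-1}_{(p)}$ and $\SO(2n)_{(p)}$ are simply connected, so by the Whitehead theorem it suffices to check that $(\iota_{(p)}\cdot\theta)_*$ is an isomorphism on $H_*(-;\Z_{(p)})$. In the Pontryagin ring $H_*(\SO(2n);\Z_{(p)})=\Lambda(x_3,x_7,\dots,x_{4n-5},x_{2n-1})$ the map carries the exterior generators of $H_*(\SO(2n-1);\Z_{(p)})\otimes H_*(S^{2n-1};\Z_{(p)})$ to $x_{4i-1}$ and to $x_{2n-1}$, the latter because $\theta$ was chosen so that $\theta_*[S^{2n-1}]$ is the middle generator; this is an isomorphism of exterior algebras.

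For the H-map statement I would invoke the standard criterion that, for H-maps $f\colon X\to G$ and $g\colon Y\to G$ into a homotopy associative H-space $G$ with inverse, the product $f\cdot g\colon X\times Y\to G$ is an H-map (with respect to the product multiplication on $X\times Y$) if and only if the Samelson product $\langle f,g\rangle$ is null. So three facts are needed. The map $\iota_{(p)}$ is an H-map as the localization of a homomorphism. The space $S^{2n-1}_{(p)}$ carries a homotopy commutative and homotopy associative multiplication, since the obstructions lie in $\pi_{4n-2}(S^{2n-1}_{(p)})$ and $\pi_{6n-3}(S^{2n-1}_{(p)})$, both of which vanish by \eqref{pi(S)} as $p\ge 2n+1$; and $\theta$ is an H-map for this multiplication because its H-deviation lies in $\pi_{4n-2}(\SO(2n)_{(p)})$, which vanishes for $p\ge 2n+1$ by \eqref{pi(S)} together with the $p$-regular splitting of $\SO(2n)_{(p)}$ into odd spheres.

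The remaining and central point is to show $\langle\iota_{(p)},\theta\rangle\simeq *$. Here I would first identify $\SO(2n-1)_{(p)}\simeq\prod_{i=1}^{n-1}S^{4i-1}$ through the map $\phi$ given by multiplying the generators $S^{4i-1}\to\SO(2n-1)_{(p)}$, which is an equivalence by the same Pontryagin-product computation as above; under this identification $\iota_{(p)}$ corresponds to the product $\epsilon_1\cdots\epsilon_{n-1}$ formed with the multiplication of $\SO(2n)_{(p)}$. I would then apply the distributivity of the Samelson product, $\langle f_1f_2,h\rangle\simeq {}^{f_1}\langle f_2,h\rangle\cdot\langle f_1,h\rangle$, inductively to rewrite $\langle\iota_{(p)},\theta\rangle$ as a product of conjugates of the $\langle\epsilon_i,\theta\rangle$. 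By Theorem \ref{main}, every $\langle\epsilon_i,\theta\rangle$ is null for $p>2n-1$, so each factor, hence the whole product, is null.

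I expect the main obstacle to be exactly this last step: a priori the commutator of $\iota_{(p)}$ with $\theta$ could involve iterated brackets among the $\epsilon_i$ and $\theta$, and $\SO(2n)_{(p)}$ is genuinely not homotopy commutative (the $\langle\epsilon_i,\epsilon_j\rangle$ need not vanish). What makes the argument go through is that, because $\iota_{(p)}$ is the multiplicative product of the $\epsilon_i$, the distributivity formula expresses $\langle\iota_{(p)},\theta\rangle$ purely through the $\langle\epsilon_i,\theta\rangle$ and their conjugates, with no surviving $\langle\epsilon_i,\epsilon_j\rangle$ contribution; the delicate part is to make this bookkeeping precise as a homotopy of maps out of $\SO(2n-1)_{(p)}\wedge S^{2n-1}_{(p)}$. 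Once $\langle\iota_{(p)},\theta\rangle\simeq *$ is established, the criterion makes $\iota_{(p)}\cdot\theta$ an H-map, and combined with the homotopy equivalence this yields the asserted H-equivalence.
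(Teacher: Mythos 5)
Your proposal is correct and follows essentially the same route as the paper: the criterion that $\lambda_1\cdot\lambda_2$ is an H-map iff $\langle\lambda_1,\lambda_2\rangle$ vanishes (the paper's Lemma \ref{deviation}), the vanishing of $\langle\iota_{(p)},\theta\rangle$ by writing $\iota_{(p)}$ as $\epsilon_1\cdots\epsilon_{n-1}$ and expanding the commutator (Lemma \ref{decomp} together with Theorem \ref{main}), and the vanishing of $d(\theta)\in\pi_{4n-2}(\SO(2n)_{(p)})$. The only cosmetic difference is that the paper obtains homotopy associativity and commutativity of $S^{2n-1}_{(p)}$ from the uniqueness of its H-structure (since $\pi_{4n-2}(S^{2n-1}_{(p)})=0$) combined with Adams's result, rather than from your direct obstruction count in $\pi_{4n-2}$ and $\pi_{6n-3}$.
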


%%%%%%%%%%%%% Section 2 %%%%%%%%%%%%%

\section{Detecting Samelson products by the Steenrod operations}

Let $G$ be a $p$-torsion free connected finite loop space of type $n_1\le\cdots\le n_\ell$ throughout this section where the type of a finite loop space is similarly defined. We set notation for $G$. Since $G$ is $p$-torsion free, we have
$$H^*(BG_{(p)};\Z/p)=\Z/p[x_1,\ldots,x_\ell],\qquad|x_i|=2n_i.$$
We fix this presentation of the mod $p$ cohomology of $BG_{(p)}$. Note that 
$$H^*(G_{(p)};\Z/p)=\Lambda(e_1,\ldots,e_\ell)$$ 
for the suspension $e_i$ of $x_i$. For each $i$, we take $\epsilon_i\in\pi_{2n_i-1}(G_{(p)})$ which is not divisible by non-units in $\mathbb{Z}_{(p)}$ such that 
\begin{equation}
\label{epsilon}
(\Sigma\epsilon_i)^*\circ\iota_1^*(x_j)=
\begin{cases}
h_i\Sigma u_{2n_i-1}&i=j\\
0&i\ne j
\end{cases}
\end{equation}
for some $h_i\in\mathbb{Z}_{(p)}$, where $\iota_1\colon\Sigma G_{(p)}\to BG_{(p)}$ is the canonical map and $u_k$ is a generator of $H^k(S^k;\mathbb{Z}_{(p)})\cong\mathbb{Z}_{(p)}$.
The following lemma is first used in \cite{KO} and is the main tool in the proof of Theorem \ref{HKO} given in \cite{HKO}. Here we reproduce the proof for completeness of the present paper.

\begin{lemma}
[{\cite[Proof of Theorem 1.1]{KO}}]
\label{criterion}
Suppose that $h_i$ and $h_j$ are units in $\mathbb{Z}_{(p)}$.
If $\mathcal{P}^1x_k$ is decomposable and includes the term $cx_ix_j$ ($c\ne 0$), the Samelson product $\langle\epsilon_i,\epsilon_j\rangle$ is nontrivial.
\end{lemma}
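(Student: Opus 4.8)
The plan is to pass from the Samelson product in $G_{(p)}=\Omega BG_{(p)}$ to a Whitehead product in $BG_{(p)}$, and then to detect the latter by a single application of $\mathcal{P}^1$. Write $\bar\epsilon_i\colon S^{2n_i}\to BG_{(p)}$ for the adjoint of $\epsilon_i$; since $\bar\epsilon_i$ is the composite $S^{2n_i}=\Sigma S^{2n_i-1}\xrightarrow{\Sigma\epsilon_i}\Sigma G_{(p)}\xrightarrow{\iota_1}BG_{(p)}$, the defining relation \eqref{epsilon} reads $\bar\epsilon_i^*(x_j)=h_i\delta_{ij}u_{2n_i}$. Under the classical adjunction $\pi_*(\Omega BG_{(p)})\cong\pi_{*+1}(BG_{(p)})$ the Samelson product $\langle\epsilon_i,\epsilon_j\rangle$ corresponds, up to sign, to the Whitehead product $[\bar\epsilon_i,\bar\epsilon_j]\in\pi_{2n_i+2n_j-1}(BG_{(p)})$, so it suffices to prove $[\bar\epsilon_i,\bar\epsilon_j]\ne 0$. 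I would argue by contradiction: if this Whitehead product vanished, then $\bar\epsilon_i\vee\bar\epsilon_j\colon S^{2n_i}\vee S^{2n_j}\to BG_{(p)}$ would extend to a map $\Phi\colon S^{2n_i}\times S^{2n_j}\to BG_{(p)}$, since $[\bar\epsilon_i,\bar\epsilon_j]$ is precisely the obstruction to extending over the top cell $e^{2n_i+2n_j}$ of the product.

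The detection step is then purely cohomological. Restricting $\Phi$ to the two factors and using \eqref{epsilon}, one finds $\Phi^*(x_r)=h_i\delta_{ri}\,a+h_j\delta_{rj}\,b$ for every generator $x_r$ of degree below $2n_i+2n_j$, where $a,b$ are the generators of $H^{2n_i},H^{2n_j}$ of $S^{2n_i}\times S^{2n_j}$ pulled back from the factors. The degree identity forced by the hypothesis is $2n_k+2(p-1)=2n_i+2n_j$, so $x_k$ has degree $2n_k<2n_i+2n_j$ and $\Phi^*(x_k)$ is a multiple of $a$ or $b$, or zero. By naturality of $\mathcal{P}^1$, a class pulled back from one of the factors has trivial $\mathcal{P}^1$, because on a single even sphere $\mathcal{P}^1$ lands above the top cohomology; hence $\mathcal{P}^1\Phi^*(x_k)=0$. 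On the other hand $\Phi^*(\mathcal{P}^1 x_k)=ch_ih_j\,ab$ is the nonzero top class, since among the decomposable terms of $\mathcal{P}^1 x_k$ only $cx_ix_j$ survives under $\Phi^*$: every other monomial either contains a factor $x_r$ with $\Phi^*(x_r)=0$, or involves three or more positive-degree factors, which map to zero in the exterior algebra $\Lambda(a,b)$. Comparing the two computations via $\Phi^*\mathcal{P}^1=\mathcal{P}^1\Phi^*$ yields $0=ch_ih_j\,ab\ne 0$, the desired contradiction.

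I expect the main obstacle to be the bookkeeping that isolates $cx_ix_j$ as the unique surviving decomposable term, especially when several polynomial generators share the dimension of $x_i$ or $x_j$, as happens for $\SO(2n)$. The crucial point is that the unit hypothesis on $h_i,h_j$ together with the orthogonality built into \eqref{epsilon} forces $\Phi^*(x_r)$ to have a nonzero $a$-component only for $r=i$ and a nonzero $b$-component only for $r=j$; this rigidity guarantees that no competing monomial of $\mathcal{P}^1 x_k$ can also hit $ab$ and cancel the wanted term. In the degenerate case $i=j$ the same scheme applies on $S^{2n_i}\times S^{2n_i}$ with $\Phi^*(x_i)=h_i(a_1+a_2)$, where the relation $(a_1+a_2)^2=2a_1a_2$ supplies the nonzero top class for odd $p$.
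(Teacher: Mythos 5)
Your argument is correct and is essentially the paper's own proof: assume the Samelson product vanishes, pass by adjointness to the Whitehead product $[\bar\epsilon_i,\bar\epsilon_j]$ in $BG_{(p)}$, extend $\bar\epsilon_i\vee\bar\epsilon_j$ to a map $\mu\colon S^{2n_i}\times S^{2n_j}\to BG_{(p)}$, and derive the contradiction $0=\mathcal{P}^1\mu^*(x_k)=\mu^*(\mathcal{P}^1x_k)=ch_ih_j\,u_{2n_i}\otimes u_{2n_j}\ne0$ using the normalization \eqref{epsilon} and the decomposability of $\mathcal{P}^1x_k$. The bookkeeping you flag (isolating $cx_ix_j$ among the decomposables, and the case $i=j$) is disposed of exactly as you suggest, since $\mu^*(x_r)$ is concentrated on the factors and all other quadratic or higher monomials die in $H^*(S^{2n_i}\times S^{2n_j};\Z/p)$.
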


\begin{proof}
Suppose $\langle\epsilon_i,\epsilon_j\rangle=0$ under the assumption that $\mathcal{P}^1x_k$ includes the term $cx_ix_j$ ($c\ne 0$). Let $\bar{\epsilon}_m\colon S^{2n_m}\to BG_{(p)}$ be the adjoint of $\epsilon_m$. Then by \eqref{epsilon}, we have $\bar{\epsilon}_m^*(x_m)=h_mu_{2m}$. By adjointness of Samelson products and Whitehead products, the Whitehead product $[\bar{\epsilon}_i,\bar{\epsilon}_j]$ in $BG_{(p)}$ is trivial, and then there is a map $\mu\colon S^{2n_i}\times S^{2n_j}\to BG_{(p)}$ satisfying $\mu\vert_{S^{2n_i}\vee S^{2n_j}}=\bar{\epsilon}_i\vee\bar{\epsilon}_j$. So we get $\mu^*(x_i)=h_i(u_{2n_i}\otimes 1)$ and $\mu^*(x_j)=h_j(1\otimes u_{2n_i})$, and hence 
$$ch_ih_ju_{2n_i}\otimes u_{2n_j}=\mu^*(cx_ix_j)=\mu^*(\mathcal{P}^1x_k)=\mathcal{P}^1\mu^*(x_k)=0$$
where the second and the last equality follows from the decomposability of $\mathcal{P}^1x_k$ and triviality of $\mathcal{P}^1$ on $H^*(S^{2n_i}\times S^{2n_j};\Z/p)$, respectively. This is a contradiction to $ch_ih_j\ne0$.
\end{proof}

In this lemma, the assumption on the decomposability of $\mathcal{P}^1x_k$ cannot be removed.
Here is a counterexample.

\begin{example}
We consider $\SU(4)$ at the prime $3$.
Recall that $H^\ast(B\SU(4);\mathbb{Z}/3)=\mathbb{Z}/3[c_2,c_3,c_4]$, where $c_i$ denotes the $i^\text{th}$ universal Chern class.
By inspection, we have
$$\mathcal{P}^1c_2=c_2^2+c_4.$$
For a degree reason, the inclusion $\epsilon_1\colon S^3=\SU(2)\to\SU(4)$ satisfies $(\Sigma\epsilon_1)^*\circ\iota_1^*(c_2)=\Sigma u_3$ as in \eqref{epsilon}, but the Samelson product $\langle\epsilon_1,\epsilon_1\rangle$ is trivial since $\SU(2)$ commutes up to homotopy with itself in $\SU(4)$.
\end{example}

We elaborate Lemma \ref{criterion} to prove that its converse is true when $G_{(p)}$ is a product of spheres.
The following lemma is useful to detect the nontriviality of a Samelson product when $G_{(p)}$ is decomposed into a product of a sphere and some space.
The proof is independent of Lemma \ref{criterion}.

\begin{lemma}
\label{criterion2}
For integers $1\le i,j,k\le\ell$, suppose that there is a map $\pi_k\colon G_{(p)}\to S^{2n_k-1}_{(p)}$ such that $\pi_k^\ast (u_{2n_k-1})=e_k$,  $h_i$ and $h_j$ are units in $\mathbb{Z}_{(p)}$, and $n_i+n_j=n_k+p-1$.
Then $\pi_k\circ\langle\epsilon_i,\epsilon_j\rangle\ne0$ if and only if $\mathcal{P}^1x_k$ includes the term $cx_ix_j$ with $c\ne 0$.
\end{lemma}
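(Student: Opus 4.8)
The plan is to identify $\pi_k\circ\langle\epsilon_i,\epsilon_j\rangle$ with an element of a homotopy group of a sphere in which $\mathcal{P}^1$ detects nontriviality faithfully, and then to read off the relevant $\mathcal{P}^1$-coefficient from the cohomology of $BG_{(p)}$. Write $\phi=\langle\epsilon_i,\epsilon_j\rangle\colon S^N\to G_{(p)}$ with $N=2n_i+2n_j-2$, and let $\bar\phi=[\bar\epsilon_i,\bar\epsilon_j]\colon S^{N+1}\to BG_{(p)}$ be its adjoint Whitehead product. The hypothesis $n_i+n_j=n_k+p-1$ gives $N=2n_k+2p-4$, so by \eqref{pi(S)} the group $\pi_N(S^{2n_k-1}_{(p)})$ is the first possibly nonzero homotopy group above the bottom cell; it is $\mathbb{Z}/p$, generated by the first $p$-torsion element $\alpha_1$, and a class is nonzero if and only if $\mathcal{P}^1$ acts nontrivially on the top cell of its mapping cone. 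Thus $\pi_k\circ\phi\ne 0$ if and only if, writing $C=S^{2n_k-1}_{(p)}\cup_{\pi_k\phi}e^{N+1}$ with bottom and top classes $a,b$, one has $\mathcal{P}^1a=\mu b$ with $\mu\ne 0$.

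First I would transport this detection to the mapping cone $C_\phi=G_{(p)}\cup_\phi e^{N+1}$. Since $\phi^*=0$ on $\tilde H^*(G_{(p)})$ (indecomposables vanish by parity and products vanish as $\tilde H^*(S^N)$ has trivial products), there is an additive splitting $\tilde H^*(C_\phi)\cong\tilde H^*(G_{(p)})\oplus\mathbb{Z}/p\langle\beta\rangle$ with $|\beta|=N+1$. The projection $\pi_k$ induces a map of mapping cones $\bar\pi_k\colon C_\phi\to C$ with $\bar\pi_k^*a=e_k$ and $\bar\pi_k^*b=\beta$ (the latter by compatibility of the collapse maps onto $S^{N+1}$), so naturality of $\mathcal{P}^1$ gives $\mathcal{P}^1e_k=\mu\beta$ in $H^*(C_\phi)$. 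Here the hypothesis that $\pi_k$ exists enters decisively: because $H^*(S^{2n_k-1}_{(p)};\mathbb{Z}/p)$ vanishes above degree $2n_k-1$, we get $\mathcal{P}^1e_k=\pi_k^*(\mathcal{P}^1u_{2n_k-1})=0$ in $H^*(G_{(p)})$. Equivalently, via the cohomology suspension $\sigma(x_k)=e_k$, the indecomposable part of $\mathcal{P}^1x_k$ vanishes, so $\mathcal{P}^1x_k$ is decomposable and $\mathcal{P}^1_{C_\phi}e_k=\mu\beta$ acquires no stray term in $\tilde H^{N+1}(G_{(p)})$; this is what makes $\mu$ a clean invariant.

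It remains to compute $\mu$ from $c$. I would pass to $BG_{(p)}$ through the canonical map $\iota_1\colon\Sigma G_{(p)}\to BG_{(p)}$, which extends to $\bar\iota_1\colon\Sigma C_\phi\to C_{\bar\phi}$ between mapping cones, where $C_{\bar\phi}=BG_{(p)}\cup_{\bar\phi}e^{N+2}$ has top class $z$ of degree $N+2=2n_i+2n_j$, and $\bar\iota_1^*x_k=\Sigma e_k$, $\bar\iota_1^*z=\Sigma\beta$. Writing the decomposable $\mathcal{P}^1x_k=\sum c_{ab}x_ax_b$ with $c=c_{ij}+c_{ji}$, in $C_{\bar\phi}$ one has $\mathcal{P}^1x_k=\sum c_{ab}x_ax_b+\nu z$ for some $\nu$. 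To pin down $\nu$ I would use the map $g\colon S^{2n_i}\times S^{2n_j}\to C_{\bar\phi}$ induced by $\bar\epsilon_i\vee\bar\epsilon_j$ on the cofibre of the universal Whitehead product; it satisfies $g^*x_i=h_iu_{2n_i}$, $g^*x_j=h_ju_{2n_j}$, $g^*x_a=0$ otherwise, and $g^*z=u_{2n_i}u_{2n_j}$. Since $\mathcal{P}^1$ vanishes on $\tilde H^*(S^{2n_i}\times S^{2n_j})$ in these degrees, applying $g^*$ and naturality forces $(ch_ih_j+\nu)u_{2n_i}u_{2n_j}=0$, whence $\nu=-ch_ih_j$. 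Finally, naturality of $\mathcal{P}^1$ along $\bar\iota_1$ gives $\mu\,\Sigma\beta=\Sigma(\mathcal{P}^1e_k)=\bar\iota_1^*(\mathcal{P}^1x_k)=\nu\,\Sigma\beta$, the decomposable summand dying because all cup products vanish in the suspension $\Sigma C_\phi$; hence $\mu=\nu=-ch_ih_j$. As $h_i,h_j$ are units, $\mu\ne 0$ if and only if $c\ne 0$, which is the claim.

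The main obstacle is the step isolating $\mu$: a priori $\mathcal{P}^1x_k$ could contain an indecomposable summand $d_ax_a$ with $n_a=n_i+n_j$, living in exactly the degree of $\beta$ and $z$, which would contaminate both $\mu$ and $\nu$ and destroy the equivalence. The existence of $\pi_k$ with $\pi_k^*u_{2n_k-1}=e_k$ is precisely what excludes this, forcing $\mathcal{P}^1e_k=0$ and hence every $d_a=0$. The remaining points are routine: one confirms the faithful $\mathcal{P}^1$-detection in the single group $\pi_N(S^{2n_k-1}_{(p)})\cong\mathbb{Z}/p$, and treats the degenerate case $i=j$, where $g^*x_i=h_i(u'+u'')$ on $S^{2n_i}\times S^{2n_i}$ yields the harmless extra factor $2$, a unit at the odd prime $p$.
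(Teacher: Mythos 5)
Your argument is correct, but it reaches the conclusion by a genuinely different route from the paper's. The paper forms the projective plane $P^2G_{(p)}$ and the map $\Phi=\Sigma\langle\epsilon_i,\epsilon_j\rangle-[\Sigma\epsilon_i,\Sigma\epsilon_j]$, invokes Morisugi's theorem to factor $\Phi$ through the Hopf construction via a map $\xi$ with $\xi^*(e_i\otimes e_j)=u_{2n_i-1}\otimes u_{2n_j-1}$, and uses Lin's identity $\delta_1^*(\Sigma^2e_i\otimes e_j)=\bar{x}_i\bar{x}_j$ to read off the coefficient of $x_ix_j$ from a comparison of cofibre sequences terminating in the two-cell complex $C_{\widehat{\Phi}}$, where $\alpha_1$ is detected by $\mathcal{P}^1$. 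You instead work with the mapping cones $C_\phi=G_{(p)}\cup_\phi e^{2n_i+2n_j-1}$ and $C_{\bar{\phi}}=BG_{(p)}\cup_{\bar{\phi}}e^{2n_i+2n_j}$ of the Samelson product and its adjoint Whitehead product, pin down the coefficient $\nu$ of the top class by mapping in $S^{2n_i}\times S^{2n_j}$, the cofibre of the universal Whitehead product, exactly in the spirit of the paper's proof of Lemma \ref{criterion}, and then transfer $\nu$ to the detecting cone over $S^{2n_k-1}_{(p)}$ through $\Sigma C_\phi\to C_{\bar{\phi}}$. The point you rightly isolate as the crux --- that the existence of $\pi_k$ with $\pi_k^*(u_{2n_k-1})=e_k$ forces $\mathcal{P}^1e_k=0$ and hence, via the cohomology suspension, the decomposability of $\mathcal{P}^1x_k$, so that $\nu$ is a well-defined invariant not contaminated by a linear term in degree $2n_i+2n_j$ --- is handled soundly, and your treatment of the case $i=j$ (the extra factor $2$, a unit since $p$ is odd) is also correct. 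What your route buys is the elimination of the external inputs (Morisugi's factorization and Lin's formula for $\delta_1^*$), at the mild cost of having to control the lifts of the $x_a$ and the ring structure of $H^*(C_{\bar{\phi}};\mathbb{Z}/p)$, which is harmless here because every relevant lift below the top degree is unique; the paper's version keeps the entire computation inside $\Sigma G_{(p)}$ and its projective plane, matching the framework in which its antecedents are phrased.
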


\begin{proof}
We prove both implications simultaneously.
We may suppose that $h_i=h_j=h_k=1$.
Let $P^2G_{(p)}$ be the projective plane of $G_{(p)}$, i.e. there is a cofiber sequence
\begin{equation}
\label{P^2G}
\Sigma G_{(p)}\wedge G_{(p)}\xrightarrow{H}\Sigma G_{(p)}\xrightarrow{\rho_1}P^2G_{(p)}
\end{equation}
where $H$ is the Hopf construction. Put $\bar{x}_i=\iota_2^*(x_i)$ for the natural map $\iota_2\colon P^2G_{(p)}\to BG_{(p)}$. Since $\iota_1=\iota_2\circ\rho_1$ for the inclusion $\rho_1\colon\Sigma G_{(p)}\to P^2G_{(p)}$ and $\iota_1^*(x_i)=\Sigma e_i$, we have $\rho_1^*(\bar{x}_i)=\Sigma e_i$. By \cite[Section 3]{L}, we also have $\delta_1^*(\Sigma^2e_i\otimes e_j)=\bar{x}_i\bar{x}_j$ for the connecting map $\delta_1\colon P^2G_{(p)}\to\Sigma^2G_{(p)}\wedge G_{(p)}$ of the cofiber sequence \eqref{P^2G}. Consider the map
$$\Phi=\Sigma\langle\epsilon_i,\epsilon_j\rangle-[\Sigma\epsilon_i,\Sigma\epsilon_j]\colon\Sigma S^{2n_i-1}\wedge S^{2n_j-1}\to\Sigma G_{(p)}$$
where $[-,-]$ denotes the Whitehead product. The map $\Phi$ is connected with the Hopf construction $H$ through the map constructed by Morisugi \cite[Theorem 5.1]{Mo} such that there is a map $\xi\colon S^{2n_i-1}\wedge S^{2n_j-1}\to G_{(p)}\wedge G_{(p)}$ satisfying
$$\Phi=H\circ\Sigma\xi\quad\text{and}\quad\xi^*(e_i\otimes e_j)=u_{2n_i-1}\otimes u_{2n_j-1}.$$
Then we get a homotopy commutative diagram
$$\xymatrix{\Sigma G_{(p)}\ar[r]^{\rho_2}\ar@{=}[d]&C_\Phi\ar[r]^(.3){\delta_2}\ar[d]^{\lambda_1}&\Sigma^2 S^{2n_i-1}\wedge S^{2n_j-1}\ar[d]^{\Sigma^2\xi}\\
\Sigma G_{(p)}\ar[r]^{\rho_1}&P^2G_{(p)}\ar[r]^(.45){\delta_1}&\Sigma^2 G_{(p)}\wedge G_{(p)}}$$
whose rows are homotopy cofibrations, implying that
\begin{equation}
\label{lambda1}
\rho_2^*\circ\lambda_1^*(\bar{x}_k)=\Sigma e_k\quad\text{and}\quad\lambda_1^*(\bar{x}_i\bar{x}_j)=\delta_2^*(\Sigma^2u_{2n_i-1}\otimes u_{2n_j-1}).
\end{equation}
We have
$$\pi_k\circ\langle\epsilon_i,\epsilon_j\rangle=c\alpha_1\quad(c\in\Z/p)$$ 
where $\alpha_1$ is a generator of $\pi_{2n_k+2p-4}(S^{2n_k-1})\cong\Z/p$ \cite[Proposition 13.6]{T}.
Note that $\pi_k\circ\langle\epsilon_i,\epsilon_j\rangle$ is nontrivial if and only if $c\ne0$.
Then for the map 
$$\widehat{\Phi}=c\Sigma\alpha_1-[\Sigma\pi_k\circ\epsilon_i,\Sigma\pi_k\circ\epsilon_j]\colon\Sigma S^{2n_i-1}\wedge S^{2n_j-1}\to\Sigma S^{2n_k-1}_{(p)}$$
there is a homotopy commutative diagram
$$\xymatrix{\Sigma G_{(p)}\ar[r]^{\rho_2}\ar[d]^{\Sigma\pi_k}&C_\Phi\ar[r]^(.3){\delta_2}\ar[d]^{\lambda_2}&\Sigma^2 S^{2n_i-1}\wedge S^{2n_j-1}\ar@{=}[d]\\
\Sigma S^{2n_k-1}\ar[r]^(.62){\rho_3}&C_{\widehat{\Phi}}\ar[r]^(.3){\delta_3}&\Sigma^2 S^{2n_i-1}\wedge S^{2n_j-1}}$$
whose rows are homotopy cofibrations. Since $\alpha_1$ is detected by the Steenrod operation $\mathcal{P}^1$, the mod $p$ cohomology of $C_{\widehat{\Phi}}$ is given by
$$\widetilde{H}^*(C_{\widehat{\Phi}};\Z/p)=\langle a_{2n_k},a_{2n_i+2n_j}\rangle,\quad\mathcal{P}^1a_{2n_k}=ca_{2n_i+2n_j}$$
such that $\delta_3^*(\Sigma^2u_{2n_i-1}\otimes u_{2n_j-1})=a_{2n_i+2n_j}$ and $\rho_3^*(a_{2n_k})=\Sigma u_{2n_k-1}$.
Then by \eqref{lambda1}, we get $\rho_2^*\circ\lambda_2^*(a_{2n_k})=\rho_2^*\circ\lambda_1^*(\bar{x}_k)$.
By the homotopy cofiber sequence $\Sigma G_{(p)}\xrightarrow{\rho_2}C_\Phi\xrightarrow{\delta_2}\Sigma^2S^{2n_i-1}\wedge S^{2n_j-1}$ one can see that the inclusion $\rho_2\colon\Sigma G_{(p)}\to C_\Phi$ is injective in the mod $p$ cohomology of dimension $2n_k$, and then we obtain $\lambda_2^*(a_{2n_k})=\lambda_1^*(\bar{x}_k)$. 
By \eqref{lambda1}, we also have $\lambda_2^*(a_{2n_i+2n_j})=\lambda_1^*(\bar{x}_i\bar{x}_j)$.
Hence since $\mathcal{P}^1a_{2n_k}=ca_{2n_i+2n_j}$, we get that $\mathcal{P}^1\bar{x}_k$ includes the term $c\bar{x}_i\bar{x}_j$. Thus since $\iota_2^*(x_m)=\bar{x}_m$ for $m=1,\ldots,\ell$, $\mathcal{P}^1x_k$ must include the term $cx_ix_j$.
Therefore we have established the lemma.
\end{proof}

\begin{theorem}
\label{P}
Suppose $p\ge n_\ell-n_1+2$.
Then the Samelson product $\langle\epsilon_i,\epsilon_j\rangle$ in $G_{(p)}$ is nontrivial if and only if for some $k$, $\mathcal{P}^1x_k$ includes the term $cx_ix_j$ with $c\ne 0$.
\end{theorem}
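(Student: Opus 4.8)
The plan is to read off from the hypothesis the two inputs that the preceding lemmas require, and then to run Lemma~\ref{criterion} for one implication and Lemma~\ref{criterion2} for the other. The first thing I would do is degree bookkeeping. The operation $\mathcal{P}^1$ raises degree by $2(p-1)$, and $p\ge n_\ell-n_1+2$ is exactly $2(p-1)>2(n_\ell-n_1)\ge 2(n_\ell-n_k)$ for every $k$. Hence $\mathcal{P}^1x_k$ lies in degree $2n_k+2(p-1)>2n_\ell$, strictly above all of the generators $x_1,\dots,x_\ell$, so $\mathcal{P}^1x_k$ is automatically decomposable and no $x_m$ is an indecomposable value of $\mathcal{P}^1$. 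The second consequence is that every generator $\epsilon_k$ is detected in mod $p$ cohomology, i.e. each $h_k$ in \eqref{epsilon} is a unit: there is no room for a lower generator to absorb $x_k$ through a Steenrod operation. With all $h_k$ units the multiplication map $\prod_k S^{2n_k-1}_{(p)}\to G_{(p)}$ built from the $\epsilon_k$ is an isomorphism on mod $p$ homology, hence a $p$-local homotopy equivalence of spaces, and in particular there are projections $\pi_k\colon G_{(p)}\to S^{2n_k-1}_{(p)}$ with $\pi_k^*(u_{2n_k-1})=e_k$.

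Given this, the \emph{if} direction is immediate. If some $\mathcal{P}^1x_k$ includes $cx_ix_j$ with $c\ne0$, then $\mathcal{P}^1x_k$ is decomposable (as just noted it always is here) and $h_i,h_j$ are units, so Lemma~\ref{criterion} yields $\langle\epsilon_i,\epsilon_j\rangle\ne0$.

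For the \emph{only if} direction I would use the splitting. Since $G_{(p)}\simeq\prod_k S^{2n_k-1}_{(p)}$, we have $\pi_{2n_i+2n_j-2}(G_{(p)})=\bigoplus_k\pi_{2n_i+2n_j-2}(S^{2n_k-1}_{(p)})$, so $\langle\epsilon_i,\epsilon_j\rangle\ne0$ forces $\pi_k\circ\langle\epsilon_i,\epsilon_j\rangle\ne0$ for some $k$. By \eqref{pi(S)} the group $\pi_*(S^{2n_k-1}_{(p)})$ vanishes for $2n_k-1<*<2n_k+2p-4$ and $\pi_{2n_k+2p-4}(S^{2n_k-1}_{(p)})\cong\Z/p$ is generated by $\alpha_1$; as $2n_i+2n_j-2$ is even it is never $2n_k-1$. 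Granting for the moment that the relevant degree cannot reach the next $p$-torsion group of the factor, a nonzero projection therefore sits in the $\alpha_1$ position, which means $2n_i+2n_j-2=2n_k+2p-4$, i.e. $n_i+n_j=n_k+p-1$. For such a $k$ the projection $\pi_k$ exists and $h_i,h_j$ are units, so Lemma~\ref{criterion2} applies and converts $\pi_k\circ\langle\epsilon_i,\epsilon_j\rangle\ne0$ into the assertion that $\mathcal{P}^1x_k$ includes $cx_ix_j$ with $c\ne0$.

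The step I expect to be the main obstacle is the parenthetical concentration claim, namely that $\pi_{2n_i+2n_j-2}(S^{2n_k-1}_{(p)})$ can be nonzero only at the $\alpha_1$ spot. The next $p$-torsion group of $S^{2n_k-1}_{(p)}$ sits at degree $2n_k+4p-6$, and the crude estimate $n_i+n_j-n_k\le 2n_\ell-n_1$ does not keep $2n_i+2n_j-2$ below it once $n_1\ge2$, so a genuine argument is needed to exclude an $\alpha_r$-contribution with $r\ge2$. Here I would combine two observations. First, the hypothesis $p-1>n_\ell-n_1$ shows that an $\alpha_r$-factor with $n_k=n_i+n_j-r(p-1)$ and an $\alpha_1$-factor with index value $n_k+(p-1)$ cannot both be admissible types, so at most one family is realized. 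Second, I would rule out the higher families through the Hopf-construction analysis of Lemma~\ref{criterion2}: the parity vanishing $\pi_k\circ\epsilon_i=\pi_k\circ\epsilon_j=0$ (which holds because $\pi_{2n_i-1}(S^{2n_k-1}_{(p)})=0$ for $n_k\ne n_i$ in this range, by \eqref{pi(S)}) kills the Whitehead term in the Morisugi relation $\Sigma\langle\epsilon_i,\epsilon_j\rangle=[\Sigma\epsilon_i,\Sigma\epsilon_j]+H\circ\Sigma\xi$, exhibiting $\pi_k\circ\langle\epsilon_i,\epsilon_j\rangle$ as carried by the Hopf construction and hence detected by $\mathcal{P}^1$ rather than by any higher operation. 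Making this last detection argument precise in the unstable range, so that it genuinely forbids an $\alpha_{\ge2}$ value, is the technical heart of the proof.
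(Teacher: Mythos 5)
Your overall architecture matches the paper's: choose the $\epsilon_i$ so that the product of spheres maps by a $p$-local equivalence onto $G_{(p)}$, use Lemma~\ref{criterion} (the paper uses Lemma~\ref{criterion2}, but either works once decomposability and the degree condition $n_i+n_j=n_k+p-1$ are forced) for the ``if'' direction, and reduce the ``only if'' direction to the statement that a nontrivial $\langle\epsilon_i,\epsilon_j\rangle$ must project nontrivially onto a factor $S^{2n_k-1}_{(p)}$ sitting in the $\alpha_1$ position, where Lemma~\ref{criterion2} converts nontriviality into the $\mathcal{P}^1$ statement. However, there is a genuine gap at exactly the step you flag yourself: ruling out a nonzero component in a factor with $n_i+n_j=n_k+r(p-1)$, $r\ge 2$. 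The paper does not prove this; it quotes it from \cite{KK} (``As in \cite{KK}, if $\langle\epsilon_i,\epsilon_j\rangle$ is nontrivial, then for some $k$ we have $n_k+p-1=n_i+n_j$ and $\pi_k\circ\langle\epsilon_i,\epsilon_j\rangle$ is nontrivial''). Your proposed substitute --- that killing the Whitehead term in Morisugi's relation exhibits $\pi_k\circ\langle\epsilon_i,\epsilon_j\rangle$ as carried by the Hopf construction ``and hence detected by $\mathcal{P}^1$'' --- does not go through: being in the image of $H$ gives you a two-cell complex $C_{\widehat\Phi}$ exactly as in Lemma~\ref{criterion2}, but when the attaching map is $c\alpha_r$ with $r\ge 2$ the cells are $2(p-1)r$ apart, $\mathcal{P}^1$ acts trivially for degree reasons, and $\alpha_r$ is detected only by a secondary (or higher) operation, so the primary-operation bookkeeping yields no contradiction. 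Closing this requires the actual argument of \cite{KK} (or an independent secondary-operation/$e$-invariant argument), and without it the ``only if'' direction is unproved.

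A secondary, smaller issue: you derive ``each $h_k$ is a unit'' from the heuristic that no Steenrod operation can carry a lower generator onto $x_k$. That is indeed the mechanism behind the hypothesis $p\ge n_\ell-n_1+2$, but turning ``$x_k$ is not in the image of $\mathcal{P}^1$ modulo decomposables'' into ``the spherical class $\epsilon_k$ is detected by $x_k$ with unit coefficient'' is itself a theorem; the paper invokes Kumpel \cite{K} for precisely this choice of $\epsilon_i$ with $h_i=1$. As written, your first paragraph asserts the conclusion rather than proving it.
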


\begin{proof}
By the result of Kumpel \cite{K}, we can choose each $\epsilon_i$ such as $h_i=1$.
Then the composite
$$S^{2n_1-1}\times\cdots\times S^{2n_\ell-1}\xrightarrow{\epsilon_1\times\cdots\times\epsilon_\ell}G_{(p)}\times\cdots\times G_{(p)}\to G_{(p)}$$
induces a $p$-local homotopy equivalence where the second map is the multiplication, and we identify $G_{(p)}$ with $S^{2n_1-1}_{(p)}\times\cdots\times S^{2n_\ell-1}_{(p)}$ by this $p$-local homotopy equivalence.
Under this assumption, $h_i$ is a unit of $\Z_{(p)}$ for any $i$.
By this decomposition, we can find a projection $\pi_k\colon G_{(p)}\to S^{2n_i-1}_{(p)}$ such that $\pi_k^\ast u_{2n_i-1}=e_i$ for each $i$.
By Lemma \ref{criterion2}, if $\mathcal{P}^1x_k$ includes the term $cx_ix_j$ with $c\ne 0$, then the Samelson product $\langle\epsilon_i,\epsilon_j\rangle$ in $G_{(p)}$ is nontrivial.
As in \cite{KK}, if $\langle\epsilon_i,\epsilon_j\rangle$ is nontrivial, then for some $1\le k\le\ell$ we have $n_k+p-1=n_i+n_j$ and $\pi_k\circ\langle\epsilon_i,\epsilon_j\rangle$ is nontrivial.
Again by Lemma \ref{criterion2}, this implies that $\mathcal{P}^1x_k$ includes the term $cx_ix_j$ with $c\ne 0$.
\end{proof}

%%%%%%%%%%%%% Section 3 %%%%%%%%%%%%%

\section{Proofs of the results}

Let $p$ be an odd prime and $p_i,e_n\in H^*(B\SO(2n)_{(p)};\Z/p)$ be the mod $p$ reduction of the $i$-th universal Pontrjagin class for $i=1,\ldots,n-1$ and the Euler class respectively. Then 
$$H^*(B\SO(2n)_{(p)};\Z/p)=\Z/p[p_1,\ldots,p_{n-1},e_n]$$
and the maps $\epsilon_i$ and $\theta$ correspond to $p_i$ and $e_n$ respectively in the sense of \eqref{epsilon}.
In particular, we take $\epsilon_i$ so that $h_i=1$ for $i\le\tfrac{p-1}{2}$ and $\theta$ so that $(\Sigma\theta)^\ast\circ\iota_1^\ast(e_n)=\Sigma u_{2n-1}$ and $(\Sigma\theta)^\ast\circ\iota_1^\ast(p_i)=0$ for any $i$.

\begin{lemma}
\label{computation_Pp}
The following statements hold.
\begin{enumerate}
\item
The element $\mathcal{P}^1p_i$ does not include the quadratic term $ce_np_j$ $(c\ne0)$ for any $i$ and $j$.
\item
If $p=2n-1$, the element $\mathcal{P}^1p_1$ is decomposable and includes the term $(-1)^{\frac{p-1}{2}}e_n^2$.
\end{enumerate}
\end{lemma}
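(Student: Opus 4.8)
The plan is to pull everything back to the maximal torus and compute with symmetric polynomials. Let $T\subset\SO(2n)$ be the standard maximal torus, so that $H^*(BT;\Z/p)=\Z/p[t_1,\dots,t_n]$ with $|t_i|=2$, and recall that for $p$ odd the restriction $H^*(B\SO(2n)_{(p)};\Z/p)\to H^*(BT;\Z/p)$ is injective with image the ring of Weyl invariants; under it $p_i$ maps to the elementary symmetric polynomial $\sigma_i:=e_i(t_1^2,\dots,t_n^2)$ and $e_n$ maps to $t_1\cdots t_n$, with $\sigma_n=e_n^2$. Since the generators $\sigma_1,\dots,\sigma_{n-1},t_1\cdots t_n$ are algebraically independent, a quadratic coefficient in either statement can be read off from the torus restriction. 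I would first record that $\mathcal{P}^1t_i=t_i^p$ (the unstable relation in degree $2$), so that by the Cartan formula $\mathcal{P}^1(t_i^2)=2t_i^{p+1}$ and hence
$$\mathcal{P}^1p_i=2\sum_{|S|=i}\sum_{j\in S}t_j^{p+1}\prod_{k\in S\setminus\{j\}}t_k^2.$$

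For part (1) the key observation is a parity argument. Every monomial appearing in the displayed expression has all exponents even, because $p+1$ is even and the remaining factors are squares. On the other hand, when a Weyl invariant is written in the generators $p_1,\dots,p_{n-1},e_n$, a generator-monomial $e_n^a\prod_j p_j^{b_j}$ restricts to a polynomial in which every variable $t_k$ occurs with exponent congruent to $a$ modulo $2$; in particular the terms that are odd in $e_n$ are exactly those whose $t$-monomials have all odd exponents. Since $e_np_j$ is odd in $e_n$ while $\mathcal{P}^1p_i$ is a sum of all-even monomials, the coefficient of $e_np_j$ in $\mathcal{P}^1p_i$ must vanish, which is precisely statement (1).

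For part (2), set $p=2n-1$, so $p+1=2n$. The formula above specializes to $\mathcal{P}^1p_1=2\sum_i t_i^{2n}=2P_n$, where $P_n=\sum_i s_i^n$ is the $n$-th power sum in the variables $s_i:=t_i^2$. I would then apply Newton's identity
$$P_n=\sum_{k=1}^{n-1}(-1)^{k-1}\sigma_kP_{n-k}+(-1)^{n-1}n\,\sigma_n,$$
and note that each term with $k\le n-1$ is a decomposable polynomial in $\sigma_1,\dots,\sigma_{n-1}=p_1,\dots,p_{n-1}$ and therefore contributes no multiple of $\sigma_n=e_n^2$, so the entire $e_n^2$-contribution comes from the last term. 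Thus the coefficient of $e_n^2$ in $\mathcal{P}^1p_1$ equals $2(-1)^{n-1}n$, and since $n=(p+1)/2$ this reduces modulo $p$ to $(-1)^{(p-1)/2}(p+1)\equiv(-1)^{(p-1)/2}$, using $n-1=(p-1)/2$. Decomposability of $\mathcal{P}^1p_1$ is then immediate from Newton's identity, every term being a genuine product of positive-degree generators.

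The computation is essentially routine once the torus model is in place; the only point requiring care is the bookkeeping in part (2), namely verifying that $\sigma_n$ enters only through the final Newton term and that the integer $2(-1)^{n-1}n$ is correctly reduced modulo $p=2n-1$ and is not accidentally divisible by $p$ (here $1\le n<p$ guarantees nonvanishing). This is where I expect the main, if minor, obstacle to lie, and checking the small cases $n=2,3$ is worthwhile to confirm the sign $(-1)^{(p-1)/2}$.
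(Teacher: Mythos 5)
Your proof is correct, and for part (2) it is essentially identical to the paper's: restrict to the maximal torus, compute $\mathcal{P}^1(t_1^2+\cdots+t_n^2)=2\sum_i t_i^{p+1}=2P_n$, and extract the coefficient of $\sigma_n=e_n^2$ from Newton's identity (you even supply the reduction $2(-1)^{n-1}n\equiv(-1)^{(p-1)/2}\bmod p$ that the paper leaves implicit). For part (1) you take a slightly different route: the paper observes that $p_i$, and hence $\mathcal{P}^1p_i$ by naturality of Steenrod operations, lies in the image of $H^*(B\SO(2n+1)_{(p)};\Z/p)$, i.e.\ in $\Z/p[p_1,\ldots,p_{n-1},e_n^2]$, so no term can be linear in $e_n$; your parity argument on the exponents of the $t_k$ establishes the same containment by direct computation on the torus. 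Both are valid and both rest on the injectivity of restriction to $BT_{(p)}$; the paper's version is a one-line appeal to naturality, while yours is self-contained and makes explicit why a term odd in $e_n$ is detected by all-odd exponents in the $t_k$.
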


\begin{proof}
Since $p_i\in H^\ast(B\SO(2n)_{(p)};\mathbb{Z}/p)$ is contained in the image from $H^\ast(B\SO(2n+1)_{(p)};\mathbb{Z}/p)$, if a quadratic term of $\mathcal{P}^1p_i$ includes $e_n$, it must be a multiple of $e_n^2$ and $i=n-\tfrac{p-1}{2}\ge1$. Thus the first statement holds.
Recall that for a maximal torus $T$ of $\SO(2n)$ and the natural map $\iota\colon BT_{(p)}\to B\SO(2n)_{(p)}$, we have
$$H^*(BT_{(p)};\Z/p)=\Z/p[t_1,\ldots,t_n],\quad|t_i|=2$$
such that $\iota^*(p_i)$ is the $i$-th elementary symmetric polynomial in $t_1^2,\ldots,t_n^2$ and $\iota^*(e_n)=t_1\cdots t_n$.
In particular, $\iota$ is injective in the mod $p$ cohomology.
Suppose $p=2n-1$.
We have
$$\iota^\ast(\mathcal{P}^1p_1)=\mathcal{P}^1(t_1^2+\cdots+t_n^2)=2((t_1^2)^{\frac{p+1}{2}}+\cdots+(t_n^2)^{\frac{p+1}{2}}).$$
Then we obtain
$$
\mathcal{P}^1p_1\equiv(-1)^{\tfrac{p-1}{2}}e_n^2\mod(p_1,\ldots,p_{n-1})^2
$$
by the Newton formula. Therefore the second statement holds.
\end{proof}

\begin{lemma}
\label{computation_Pe}
The element $\mathcal{P}^1e_n$ is decomposable and the following congruence hold:
$$\mathcal{P}^1e_n\equiv(-1)^{\frac{p-1}{2}}\tfrac{p-1}{2}e_np_{\frac{p-1}{2}}\mod(p_1,\ldots,p_{n-1})^2.$$
\end{lemma}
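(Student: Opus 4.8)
The plan is to pull the whole statement back to the maximal torus, exactly as in the proof of Lemma~\ref{computation_Pp}, so that it becomes an identity of symmetric polynomials in $\mathbb{Z}/p[t_1,\dots,t_n]$. Since the map $\iota^*\colon H^*(B\SO(2n)_{(p)};\mathbb{Z}/p)\to H^*(BT_{(p)};\mathbb{Z}/p)$ is injective and sends $e_n$ to $t_1\cdots t_n$ and $p_i$ to the $i$-th elementary symmetric polynomial in $t_1^2,\dots,t_n^2$, it suffices to compute $\iota^*(\mathcal{P}^1e_n)=\mathcal{P}^1(t_1\cdots t_n)$ and then to recognize the answer inside the subring generated by the $\iota^*(p_i)$ and $\iota^*(e_n)$.

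First I would apply the Cartan formula together with $\mathcal{P}^1t_i=t_i^p$ (valid since $|t_i|=2$) to get
\begin{equation*}
\mathcal{P}^1(t_1\cdots t_n)=\sum_{i=1}^n t_1\cdots t_i^{p}\cdots t_n=(t_1\cdots t_n)\sum_{i=1}^n t_i^{p-1}=\iota^*(e_n)\sum_{i=1}^n (t_i^2)^{\frac{p-1}{2}}.
\end{equation*}
The cofactor $\sum_i (t_i^2)^{(p-1)/2}$ is the power sum of degree $\tfrac{p-1}{2}$ in the variables $t_i^2$, hence it is a symmetric polynomial in the $t_i^2$ and therefore lies in the subring generated by the $\iota^*(p_i)$. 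This already gives decomposability of $\mathcal{P}^1e_n$, since it is visibly $e_n$ times a polynomial in the Pontrjagin classes with no constant term.

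It then remains to extract the indecomposable part of this power sum modulo the square of the ideal. Here Newton's identity expresses $\sum_i (t_i^2)^{(p-1)/2}$ as a single linear term in the $\tfrac{p-1}{2}$-th elementary symmetric polynomial plus terms that are decomposable in the $\iota^*(p_i)$; the leading coefficient is exactly the sign and factor recorded in the statement, by the same bookkeeping used at the end of the proof of Lemma~\ref{computation_Pp}. Since $\tfrac{p-1}{2}\le n-1$ in the range where $p_{(p-1)/2}$ exists, the only elementary symmetric polynomial surviving modulo decomposables is $\iota^*(p_{(p-1)/2})$, and every decomposable term pulls back from $(p_1,\dots,p_{n-1})^2$. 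Multiplying back by $\iota^*(e_n)$ and using $e_n\cdot(p_1,\dots,p_{n-1})^2\subseteq(p_1,\dots,p_{n-1})^2$ yields
\begin{equation*}
\iota^*(\mathcal{P}^1e_n)\equiv (-1)^{\frac{p-1}{2}}\tfrac{p-1}{2}\,\iota^*\!\big(e_np_{\frac{p-1}{2}}\big)\pmod{\iota^*\big((p_1,\dots,p_{n-1})^2\big)},
\end{equation*}
and the claimed congruence follows from injectivity of $\iota^*$.

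The main obstacle I anticipate is purely the sign and coefficient bookkeeping in Newton's identity: one must make sure that the factor $\tfrac{p-1}{2}$ and its sign are the genuine leading coefficient of the power sum, and that no contribution of the top class $\iota^*(e_n^2)$ intrudes, which is precisely why the term $p_{(p-1)/2}$ must actually exist, i.e. $\tfrac{p-1}{2}\le n-1$. Everything else reduces to the routine torus computation already carried out for Lemma~\ref{computation_Pp}.
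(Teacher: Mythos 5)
Your proposal is correct and takes essentially the same route as the paper: pull everything back along the injective map $\iota^*$ to the maximal torus, compute $\mathcal{P}^1(t_1\cdots t_n)=t_1\cdots t_n\bigl((t_1^2)^{\frac{p-1}{2}}+\cdots+(t_n^2)^{\frac{p-1}{2}}\bigr)$ by the Cartan formula, and then identify the power sum in the $t_i^2$ modulo decomposables using Newton's identity. The paper's proof is simply a terser version of this exact computation.
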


\begin{proof}
We set $\iota\colon BT_{(p)}\to B\SO(2n)_{(p)}$ as in the proof of Lemma \ref{computation_Pp}.
We have
$$\iota^*(\mathcal{P}^1e_n)=\mathcal{P}^1\iota^*(e_n)=\mathcal{P}^1(t_1\cdots t_n)=t_1\cdots t_n((t_1^2)^{\frac{p-1}{2}}+\cdots+(t_n^2)^{\frac{p-1}{2}}).$$
Then the proof is completed by the Newton formula.
\end{proof}

\begin{proof}
[Proof of Theorem \ref{main}]
Assume $p>2n-2$.
Since the inclusion $\SO(2n-1)_{(p)}\to\SO(2n)_{(p)}$ has a left homotopy inverse, it follows from Theorem \ref{classical} that the Samelson product $\langle\epsilon_i,\epsilon_j\rangle$ is nontrivial if and only if $2i+2j>p$.
To detect the Samelson products $\langle\epsilon_i,\theta\rangle=\langle\theta,\epsilon_i\rangle$ and $\langle\theta,\theta\rangle$ by Theorem \ref{P}, we need the information about the quadratic terms of $\mathcal{P}^1p_i$ and $\mathcal{P}^1e_n$ including $e_n$.
Now these informations have already been obtained in Lemma \ref{computation_Pp} and \ref{computation_Pe}.
Therefore the proof of Theorem \ref{main} is completed.
\end{proof}

Lemma \ref{computation_Pe} implies nontriviality of the Samelson product $\langle\epsilon_{\tfrac{p-1}{2}},\theta\rangle$ not only when $\SO(2n)$ is $p$-regular but also when $\SO(2n)$ is not $p$-regular as follows.

\begin{corollary}
\label{e_theta}
The Samelson product $\langle\epsilon_{\tfrac{p-1}{2}},\theta\rangle=\langle\theta,\epsilon_{\tfrac{p-1}{2}}\rangle$ in $\pi_{2n+2p-4}(\SO(2n)_{(p)})$ is nontrivial for any odd prime $p$.
More precisely, the image of $\langle\epsilon_{\tfrac{p-1}{2}},\theta\rangle$ under the homomorphism induced by the projection $\SO(2n)_{(p)}\to S^{2n-1}_{(p)}$ generates $\pi_{2n+2p-4}(S^{2n-1}_{(p)})\cong\mathbb{Z}/p$.
\end{corollary}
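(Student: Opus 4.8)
The plan is to apply Lemma \ref{criterion2} directly to $G=\SO(2n)$, \emph{without} invoking $p$-regularity: that lemma requires only that $G_{(p)}$ be a $p$-torsion free finite loop space (which $\SO(2n)_{(p)}$ is for odd $p$, its only torsion being $2$-torsion) together with a single projection onto the relevant factor sphere. Concretely I would set $i=\tfrac{p-1}{2}$, so that $\epsilon_i=\epsilon_{\tfrac{p-1}{2}}$ corresponds to $p_{\tfrac{p-1}{2}}$ with $n_i=p-1$, and let $\theta$ play the role of $\epsilon_j$, corresponding to $e_n$ with $n_j=n$; the target sphere is the middle factor $S^{2n-1}_{(p)}$, so $n_k=n$ and $x_k=e_n$. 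The numerical hypotheses of Lemma \ref{criterion2} are then immediate: $h_i=h_j=1$ by the normalizations of $\epsilon_{\tfrac{p-1}{2}}$ and $\theta$ fixed before Lemma \ref{computation_Pp}, and $n_i+n_j=(p-1)+n=n+p-1=n_k+p-1$. This forces $\tfrac{p-1}{2}\le n-1$, i.e.\ $p\le 2n-1$, which is exactly the range in which $\epsilon_{\tfrac{p-1}{2}}$ is defined; for $p<2n-1$ the group $\SO(2n)$ is not $p$-regular, so this is the genuinely new content.

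The only ingredient not supplied for free is a projection $\pi\colon\SO(2n)_{(p)}\to S^{2n-1}_{(p)}$ pulling $u_{2n-1}$ back to the exterior generator dual to $\theta$, namely the suspension of the Euler class. I would take $\pi$ to be the $p$-localization of the bundle projection $q\colon\SO(2n)\to\SO(2n)/\SO(2n-1)=S^{2n-1}$. Since $q$ composed with the fibre inclusion $\iota\colon\SO(2n-1)\to\SO(2n)$ is nullhomotopic, $q^*(u_{2n-1})$ lies in $\ker(\iota^*)$ in degree $2n-1$; because $\iota^*$ annihilates the Euler suspension but is injective on the Pontrjagin generator (the only other indecomposable in that degree, present when $n$ is even), this kernel is precisely the line spanned by the Euler suspension. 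To see that $q^*(u_{2n-1})$ is nonzero, I would use the classical fact that the composite $q\circ\theta\colon S^{2n-1}\to S^{2n-1}$ has degree $\pm\chi(S^{2n})=\pm2$: the boundary map of the unit tangent bundle $S^{2n-1}\to V_2(\mathbb{R}^{2n+1})\to S^{2n}$ of the even sphere is multiplication by its Euler characteristic, and $q\theta$ realizes this boundary. As $2$ is a unit in $\mathbb{Z}_{(p)}$ for odd $p$, it follows that $q^*(u_{2n-1})=c\,e_n$ with $c$ a unit, and after rescaling $q$ by a degree-$c^{-1}$ self-equivalence of $S^{2n-1}_{(p)}$ the required normalization $\pi^*(u_{2n-1})=e_n$ holds.

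With the projection in hand, Lemma \ref{criterion2} gives that $\pi\circ\langle\epsilon_{\tfrac{p-1}{2}},\theta\rangle\ne0$ if and only if $\mathcal{P}^1e_n$ contains the monomial $c\,p_{\tfrac{p-1}{2}}e_n$ with $c\ne0$. By Lemma \ref{computation_Pe} this monomial occurs with coefficient $(-1)^{\tfrac{p-1}{2}}\tfrac{p-1}{2}$, which is nonzero modulo $p$. Hence $\pi\circ\langle\epsilon_{\tfrac{p-1}{2}},\theta\rangle$ is a nonzero element of $\pi_{2n+2p-4}(S^{2n-1}_{(p)})\cong\mathbb{Z}/p$, and being in a group of prime order it is a generator; in particular $\langle\epsilon_{\tfrac{p-1}{2}},\theta\rangle$ itself is nontrivial, proving both assertions.

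I expect the second paragraph to be the main obstacle. In the $p$-regular range the projection comes for free from the splitting $\SO(2n)_{(p)}\simeq\prod S^{2n_i-1}_{(p)}$, but off that range one must work with the sphere bundle $\SO(2n-1)\to\SO(2n)\to S^{2n-1}$ and verify both that $q^*(u_{2n-1})$ is a unit multiple of the Euler suspension and that the unit is prime to $p$. The degree computation $q\theta\simeq\pm2$ is the crux, since it is exactly what prevents $u_{2n-1}$ from being transgressed away in the Serre spectral sequence of the bundle. Everything else is bookkeeping once Lemmas \ref{criterion2} and \ref{computation_Pe} are granted.
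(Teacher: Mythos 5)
Your proposal is correct and follows essentially the same route as the paper: the paper's proof is exactly an application of Lemma \ref{criterion2} (whose hypotheses, as you note, do not require $p$-regularity) with the projection $\pi\colon\SO(2n)_{(p)}\to S^{2n-1}_{(p)}$ satisfying $\pi^*(u_{2n-1})=e_n$, combined with Lemma \ref{computation_Pe}. The only difference is that the paper asserts the cohomological property of $\pi$ without proof, whereas you verify it via the degree-$\pm 2$ computation for $q\circ\theta$; that verification is sound and fills in a detail the paper leaves implicit.
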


\begin{proof}
Note that, for the projection $\pi\colon\SO(2n)_{(p)}\to S^{2n-1}_{(p)}$, we have $(\Sigma\pi)^\ast \Sigma u_{2n-1}=\iota_1^\ast(e_n)$.
Then the corollary follows from Lemma \ref{criterion2} and \ref{computation_Pe}.
\end{proof}

We next prove Theorem \ref{normal}. Let $X$ be a homotopy associative H-space with inverse. For maps $\alpha\colon A\to X$ and $\beta\colon B\to X$, let $\{\alpha,\beta\}$ denote the composite
$$A\times B\xrightarrow{\alpha\times\beta}X\times X\to X$$
where the last arrow is the commutator map. Then for the projection $q\colon A\times B\to A\wedge B$, we have $q^*(\langle\alpha,\beta\rangle)=\{\alpha,\beta\}$ and the induced map $q^*\colon[A\wedge B,X]\to[A\times B,X]$ is injective. In particular, $\langle\alpha,\beta\rangle$ is trivial if and only if $\{\alpha,\beta\}$.

\begin{lemma}
[cf. {\cite[Proposition 1]{KK}}]
\label{decomp}
For maps $\varphi_i\colon A_i\to X$ ($i=1,2$) and $\beta\colon B\to X$, if $\{\varphi_2,\beta\}$ is trivial, then 
$$\{\varphi_1\cdot\varphi_2,\beta\}=\{\varphi_1,\beta\}\circ\rho_2$$
where $\rho_i\colon A_1\times A_2\times B\to A_i$ denotes the projection for $i=1,2$.
\end{lemma}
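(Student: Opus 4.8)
The plan is to argue entirely at the level of the commutator maps $\{-,-\}$, since the statement is phrased in those terms, and to reduce the desired equality to the elementary group-commutator identity
$$[xy,z]=x\,[y,z]\,x^{-1}\cdot[x,z],$$
read up to homotopy in the homotopy associative H-space $X$ with inverse, where $[x,z]=xzx^{-1}z^{-1}$ is the pointwise commutator. Applying this with $x=\varphi_1(a_1)$, $y=\varphi_2(a_2)$, $z=\beta(b)$ expresses the value of $\{\varphi_1\cdot\varphi_2,\beta\}$ at a point $(a_1,a_2,b)\in A_1\times A_2\times B$ as the product in $X$ of a conjugated term $\varphi_1(a_1)\,\{\varphi_2,\beta\}(a_2,b)\,\varphi_1(a_1)^{-1}$ and the term $\{\varphi_1,\beta\}(a_1,b)$.

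First I would make precise that this manipulation is legitimate as a statement about maps, not just pointwise. Fixing once and for all a homotopy witnessing associativity of the multiplication and a homotopy inverse on $X$, one verifies that the two sides of the commutator identity, regarded as maps $A_1\times A_2\times B\to X$, are homotopic; this is exactly the point at which homotopy associativity and the existence of an inverse enter, as in the proof of \cite[Proposition 1]{KK}.

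Next I would invoke the hypothesis that $\{\varphi_2,\beta\}$ is trivial. Since conjugation $x\mapsto\varphi_1(a_1)\,x\,\varphi_1(a_1)^{-1}$ is a based map, fixing the identity element of $X$ up to the homotopy unit and inverse relations, a null-homotopy of $\{\varphi_2,\beta\}$ through the identity induces a null-homotopy of the conjugated term $\varphi_1(a_1)\,\{\varphi_2,\beta\}(a_2,b)\,\varphi_1(a_1)^{-1}$. Feeding this into the product above and using the homotopy unit, the conjugated factor collapses and the product reduces, up to homotopy, to the single factor $\{\varphi_1,\beta\}(a_1,b)$. This surviving map depends only on the coordinates $(a_1,b)$, hence factors through the projection of $A_1\times A_2\times B$ that forgets the $A_2$-coordinate, namely the map $\rho_2$ in the statement, giving precisely $\{\varphi_1\cdot\varphi_2,\beta\}=\{\varphi_1,\beta\}\circ\rho_2$.

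The step I expect to be the main obstacle is purely the coherent bookkeeping of homotopies: because $X$ is only homotopy associative, the rearrangements in the commutator identity and the insertion of the null-homotopy into the conjugated factor must be assembled into a single homotopy of maps out of $A_1\times A_2\times B$, rather than being checked one point at a time. All the flexibility required for this is supplied by the fixed associating homotopy and homotopy inverse, and the bookkeeping is entirely parallel to that in \cite[Proposition 1]{KK}.
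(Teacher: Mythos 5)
Your proposal is correct and follows essentially the same route as the paper: apply the commutator identity $[xy,z]=x[y,z]x^{-1}[x,z]$ with $x=\varphi_1$, $y=\varphi_2$, $z=\beta$ (suitably precomposed with projections) and use the hypothesis to kill the conjugated middle term. The only difference is one of packaging: the paper performs the whole computation inside the group of homotopy classes $[A_1\times A_2\times B,X]$, where the identity is a literal group-theoretic fact and the conjugate of the trivial element is trivial for free, so the coherence bookkeeping you flag as the main obstacle never arises.
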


\begin{proof}
In the group of the homotopy set $[A_1\times A_2\times B,X]$, we have
$$\{\varphi_1\cdot\varphi_2,\beta\}=[(\varphi_1\circ\pi_1)\cdot(\varphi_2\circ\pi_2),\beta\circ\pi]$$
where $\pi_i\colon A_1\times A_2\times B\to A_i$ for $i=1,2$ and $\pi\colon A_1\times A_2\times B\to B$ denote the projections and $[-,-]$ means the commutator. In a group $G$, we have
$$[xy,z]=x[y,z]x^{-1}[x,z]$$
for $x,y,z\in G$. Then the proof is completed by $[\varphi_i\circ\pi_i,\beta\circ\pi]=\{\varphi_i,\beta\}\circ\rho_i$.
\end{proof}

\begin{proof}
[Proof of Theorem \ref{normal}]
Let $\iota\colon\SO(2n-1)\to\SO(2n)$ denote the inclusion and $\pi\colon\SO(2n)\to S^{2n-1}$ the projection.
For $p=2$ and $n\ge 2$, as remarked in Section \ref{section_intro}, the $2$-localization $\iota_{(2)}\colon\SO(2n-1)_{(2)}\to\SO(2n)_{(2)}$ is not homotopy normal by the argument by James \cite[Proof of Theorem (3.1)]{J}.

If $2<p\le 2n-1$, then the Samelson product
$$\pi_{(p)}\circ\langle\iota_{(p)},1_{\SO(2n)_{(p)}}\rangle\circ(\epsilon_{\tfrac{p-1}{2}}\wedge\theta)=\pi_{(p)}\circ\langle\epsilon_{\tfrac{p-1}{2}},\theta\rangle$$
is nontrivial in $\pi_{2n+2p-4}(S^{2n-1}_{(p)})$ by Corollary \ref{e_theta}.
This implies that $\iota_{(p)}$ is not homotopy normal.

Suppose $p>2n-1$.
Note that the identity map of $\SO(2n)_{(p)}$ is identified with the map $\iota_{(p)}\cdot\theta\colon\SO(2n-1)_{(p)}\times S^{2n-1}_{(p)}\to\SO(2n)_{(p)}$.
Then it follows from Lemma \ref{decomp} that $\iota_{(p)}$ is homotopy normal if the Samelson product $\langle\iota_{(p)},\theta\rangle$ is trivial.
Note also that $\iota_{(p)}$ is identified with the map $\epsilon_1\cdots\epsilon_{n-1}\colon S^3_{(p)}\times\cdots\times S^{4n-5}_{(p)}\to\SO(2n-1)_{(p)}$. Then it is sufficient to show that $\{\epsilon_1\cdots\epsilon_{n-1},\theta\}$ is trivial. By Lemma \ref{decomp}, this is equivalent to that $\langle\epsilon_i,\theta\rangle$ are trivial for all $i$.
Thus $\iota_{(p)}$ is homotopy normal by Theorem \ref{main}.
\end{proof}

We finally prove Theorem \ref{H-type}. Let $X,Y$ be homotopy associative H-spaces with inverse. Recall that the H-deviation $d(f)$ of a map $f\colon X\to Y$ is defined by
$$d(f)\colon X\wedge X\to Y,\quad(x_1,x_2)\mapsto f(x_1x_2)f(x_2)^{-1}f(x_1)^{-1}.$$
By definition, $f$ is an H-map if and only if the H-deviation $d(f)$ is trivial. 

\begin{lemma}
\label{deviation}
Let $X_1,X_2,Y$ be homotopy associative H-spaces with inverse, and $\lambda_i\colon X_i\to Y$ be H-maps for $i=1,2$. Then the map $\lambda_1\cdot\lambda_2\colon X_1\times X_2\to Y$ is an H-map if and only if the Samelson product $\langle\lambda_1,\lambda_2\rangle$ is trivial.
\end{lemma}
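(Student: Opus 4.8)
The plan is to compute the H-deviation $d(\lambda_1\cdot\lambda_2)$ explicitly and to recognize it, up to homotopy, as a conjugate of the commutator map $\{\lambda_1,\lambda_2\}$. Writing $f=\lambda_1\cdot\lambda_2$ and using that the multiplication on $X_1\times X_2$ is coordinatewise, I would first expand, directly from the definition of the H-deviation,
$$d(f)\bigl((a_1,a_2),(b_1,b_2)\bigr)=\lambda_1(a_1b_1)\lambda_2(a_2b_2)\,\lambda_2(b_2)^{-1}\lambda_1(b_1)^{-1}\,\lambda_2(a_2)^{-1}\lambda_1(a_1)^{-1}.$$
Since $\lambda_1$ and $\lambda_2$ are H-maps, I may replace $\lambda_1(a_1b_1)$ by $\lambda_1(a_1)\lambda_1(b_1)$ and $\lambda_2(a_2b_2)$ by $\lambda_2(a_2)\lambda_2(b_2)$ up to homotopy; homotopy associativity in $Y$ together with the inverse axiom then cancels the pair $\lambda_2(b_2)\lambda_2(b_2)^{-1}$, so the variable $b_2$ disappears entirely and the remaining expression becomes
$$d(f)\simeq\lambda_1(a_1)\bigl(\lambda_1(b_1)\lambda_2(a_2)\lambda_1(b_1)^{-1}\lambda_2(a_2)^{-1}\bigr)\lambda_1(a_1)^{-1}=\lambda_1(a_1)\,\{\lambda_1,\lambda_2\}(b_1,a_2)\,\lambda_1(a_1)^{-1}.$$
Thus $d(f)$ is homotopic to the $\lambda_1(a_1)$-conjugate of the commutator map $\{\lambda_1,\lambda_2\}$ evaluated at $(b_1,a_2)$, and in particular it depends only on the coordinates $a_1,b_1,a_2$.

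For the direction assuming the Samelson product is trivial, recall that $\langle\lambda_1,\lambda_2\rangle$ is trivial if and only if $\{\lambda_1,\lambda_2\}$ is, as observed just before the statement. If $\{\lambda_1,\lambda_2\}\simeq\ast$, then the bracketed factor above is homotopic to the basepoint of $Y$, and conjugating the basepoint by $\lambda_1(a_1)$ returns the basepoint; hence $d(f)\simeq\ast$ and $f=\lambda_1\cdot\lambda_2$ is an H-map.

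For the converse I would detect $\{\lambda_1,\lambda_2\}$ inside $d(f)$ by restriction, rather than by trying to untwist the conjugation globally. I precompose $d(f)$ with the map $(b_1,a_2)\mapsto\bigl((\ast,a_2),(b_1,\ast)\bigr)$ from $X_1\times X_2$ to $(X_1\times X_2)\times(X_1\times X_2)$, followed by the projection to the smash $(X_1\times X_2)\wedge(X_1\times X_2)$. Because the first (resp. second) coordinate becomes the basepoint when $a_2=\ast$ (resp. $b_1=\ast$), this composite factors through $X_1\wedge X_2$, and on the locus $a_1=\ast$ the conjugating factor is forced to $\lambda_1(\ast)=\ast$; the formula above then identifies the composite with $\langle\lambda_1,\lambda_2\rangle$ up to homotopy. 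Consequently, if $f$ is an H-map, so that $d(f)\simeq\ast$, this composite is null, forcing $\langle\lambda_1,\lambda_2\rangle\simeq\ast$.

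I expect the main obstacle to be the careful bookkeeping of the first step—justifying, via homotopy associativity and the inverse axiom in $Y$, that the expansion really collapses to a single conjugated commutator with the $b_2$-dependence gone—together with the clean handling of the conjugation in the converse, which I avoid untwisting in general by instead restricting to the locus $a_1=\ast$, where the conjugating element is the basepoint and $\{\lambda_1,\lambda_2\}$ is recovered intact.
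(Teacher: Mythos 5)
Your proposal is correct and follows essentially the same route as the paper: expand $d(\lambda_1\cdot\lambda_2)$ using that $\lambda_1,\lambda_2$ are H-maps, cancel the $\lambda_2(b_2)$-terms, and identify the result with the conjugate $\lambda_1(a_1)\{\lambda_1,\lambda_2\}(b_1,a_2)\lambda_1(a_1)^{-1}$, from which both implications follow. Your explicit restriction to the locus $a_1=\ast$ in the converse direction merely spells out what the paper leaves implicit in its final sentence.
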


\begin{proof}
For $x_i,x_i'\in X_i$ ($i=1,2$), we have
\begin{align*}
d(\lambda_1\cdot\lambda_2)(x_1,x_2,x_1',x_2')&\simeq\lambda_1(x_1x_1')\lambda_2(x_2x_2')\lambda_2(x_2')^{-1}\lambda_1(x_1')^{-1}\lambda_2(x_2)^{-1}\lambda_1(x_1)^{-1}\\
&\simeq\lambda_1(x_1)(\langle\lambda_1,\lambda_2\rangle(x_1',x_2))\lambda_1(x_1)^{-1}
\end{align*}
since $\lambda_1,\lambda_2$ are H-maps. Then since $\lambda_1$ is an H-map, $d(\lambda_1\cdot\lambda_2)$ is trivial if and only if so is $\langle\lambda_1,\lambda_2\rangle$, completing the proof.
\end{proof}

\begin{proof}
[Proof of Theorem \ref{H-type}]
Obviously, the map $\iota_{(p)}\cdot\theta$ is a homotopy equivalence, so it remains to show that it is an H-map. By definition, we have $d(\theta)\in\pi_{4n-2}(\SO(2n)_{(p)})$, and then by \cite[Proposition 13.6]{T} and $p>2n-1$, $d(\theta)$ is trivial, implying that $\theta$ is an H-map. The inclusion $\iota_{(p)}$ is clearly an H-map, and in the proof of Theorem \ref{normal} the Samelson product $\langle\iota_{(p)},\theta\rangle$ is shown to be trivial for $p>2n-1$. Thus by Lemma \ref{deviation}, $\iota_{(p)}\cdot\theta$ is an H-map. Note that we have not fixed an H-structure of $S^{2n-1}_{(p)}$. There is a one to one correspondence between H-structures on $S^{2n-1}_{(p)}$ and $\pi_{4n-2}(S^{2n-1}_{(p)})$. By \cite[Proposition 13.6]{T} and $p>2n-1$, $\pi_{4n-2}(S^{2n-1}_{(p)})=0$, so there is only one H-structure on $S^{2n-1}_{(p)}$. By \cite{A}, $S^{2n-1}_{(p)}$ has a homotopy associative and homotopy commutative H-structure. Then $S^{2n-1}_{(p)}$ must be  a homotopy associative and homotopy commutative H-space. 
\end{proof}

\end{document}